\newcommand{\pn}{{\mathbb{P}^n}} \newcommand{\opn}{{\cal
 O}_{\mathbb{P}^n}}
\newtheorem{theorem}{Theorem}[section]
\newtheorem{proposition}[theorem]{Proposition}
\newtheorem{lemma}[theorem]{Lemma}
\newtheorem{corollary}[theorem]{Corollary}
\newtheorem{remark}[theorem]{Remark}
\newtheorem{definition}[theorem]{{\bf Definition}}
\begin{document}

\title{Pure resolutions of vector bundles on complex projective spaces}
\author{Marcos Jardim, Daniela Moura Prata}
\date{\today}
\maketitle

\begin{abstract} 
We prove three results on pure resolutions of vector bundles on projective spaces. First, we show that there are simple vector bundles of rank $n$ on $\pn$ with arbitrary homological dimension. We then analyze the pure resolutions given by the sheafification of the Koszul complex of a certain algebra and by the sheafification of the minimal free resolution of a compressed Gorenstein Artinian graded algebra, proving that their syzygies are simple vector bundles. Our main tool is a result originally established by Brambilla, for which we give an alternative proof using representations of quivers.
\end{abstract}

\section{Introduction}

Vector bundles over algebraic varieties play a central role in algebraic geometry. They have important applications in complex geometry, representation theory and mathematical physics.

Vector bundles over projective spaces are particularly important, and many interesting problems are still open. For instance, there are few known examples, in characteristic zero,  of non splitting vector bundles on $\pn$ of rank $r$, with
$2 \leq r \leq n-1$. They are: the Horrocks-Mumford bundle of rank $2$ on $\mathbb{P}^4$ \cite{HM}; Horrocks' {\it parent bundle} of rank $3$ on $\mathbb{P}^{5}$ \cite{Ho}; he instanton bundles of rank $2n$ on $\mathbb{P}^{2n +1}$, see for instance \cite{AO}; Sassakura's rank $3$ on $\mathbb{P}^4$ vector bundle \cite{Sas}; the weighted Tango bundles of rank $n-1$ on $\pn$ \cite{Cas}, that generalizes the Tango bundle \cite{T}; and more recently the rank $3$ vector bundles on $\mathbb{P}^4$, constructed by Kumar, Peterson e Rao \cite{KPR}.

A result of Bohnhorst and Spindler provides a link between low rank bundles and resolutions by sums of line bundles. More precisely, let $E$ be a vector bundle over $\mathbb{P}^n$. A {\it resolution} of $E$ is an exact sequence

\begin{eqnarray}\label{defresol}
\xymatrix{0 \ar[r]& F_d \ar[r] & F_{d-1} \ar[r] & \cdots  \ar[r] & F_1 \ar[r] & F_0 \ar[r] & E \ar[r]& 0}
\end{eqnarray}  where every $F_i$ splits as a direct sum of line bundles.

One can show that every vector bundle on $\pn$ admits resolution of the form (\ref{defresol}), see \cite[Proposition 5.3]{JM}.  The minimal number $d$ of such resolution is called {\it homological dimension} of $E$, and it is denoted by ${\rm hd}(E)$.  Bohnhorst and Spindler proved the following two results, \cite[Proposition 1.4]{BS} and \cite[Corollary 1.7]{BS}, respectively.

\begin{proposition}{\label{bsprop}}
Let $E$ be a vector bundle on $\pn$. Then
$$ {\rm hd}(E) \leq d \Longleftrightarrow H^{q}_{*}(E) = 0, \forall \; 1 \leq q \leq n-d-1.$$
\end{proposition}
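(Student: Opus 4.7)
The plan is to proceed by induction on $d$. For the base case $d=0$, the statement reduces to Horrocks' splitting criterion: a vector bundle on $\pn$ satisfies ${\rm hd}(E)=0$ (equivalently, $E$ is a direct sum of line bundles) if and only if $H^q_*(E)=0$ for all $1\leq q\leq n-1$. Assuming the statement at level $d-1$, the inductive step in both directions is organized around a canonical short exact sequence $0\to K_1\to F_0\to E\to 0$, shuttling cohomological data between $E$ and $K_1$ via the fact that any sum of line bundles $F$ on $\pn$ satisfies $H^q(F(k))=0$ for $1\leq q\leq n-1$ and all $k$.

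For $(\Rightarrow)$, given a resolution of length $d$, I would split it into short exact sequences $0\to K_{i+1}\to F_i\to K_i\to 0$ with $K_0=E$ and $K_d=F_d$. The long exact sequence in cohomology, combined with the vanishing just recalled for each $F_i$, produces shift isomorphisms $H^q(K_i(k))\cong H^{q+1}(K_{i+1}(k))$ valid in the range $1\leq q\leq n-2$. Iterating yields $H^q(E(k))\cong H^{q+d}(F_d(k))$, which vanishes as soon as $q+d\leq n-1$, giving precisely the asserted range.

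For $(\Leftarrow)$, I would exploit that $H^0_*(E)$ is a finitely generated graded module over the homogeneous coordinate ring of $\pn$. A graded free cover then provides a sum of line bundles $F_0$ together with a surjection $F_0\to E$ such that $H^0(F_0(k))\to H^0(E(k))$ is surjective for \emph{every} $k$. Setting $K_1=\ker(F_0\to E)$, the long exact sequence gives $H^1(K_1(k))=0$ for all $k$, together with $H^q(K_1(k))\cong H^{q-1}(E(k))$ for $2\leq q\leq n-1$. Combined with the hypothesis on $E$, one obtains $H^q_*(K_1)=0$ for $1\leq q\leq n-d$, which is precisely the inductive hypothesis applied to $K_1$ at level $d-1$. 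Hence ${\rm hd}(K_1)\leq d-1$, and prepending $F_0$ to a length-$(d-1)$ resolution of $K_1$ produces a resolution of $E$ of length at most $d$.

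The main technical point is the construction of $F_0$ in $(\Leftarrow)$: one needs surjectivity not merely at the sheaf level but on global sections in every twist, which is essential for the vanishing $H^1_*(K_1)=0$ and hence for the induction to close. This is exactly what a graded free cover of the finitely generated module $H^0_*(E)$ accomplishes, and it relies only on standard finite generation properties of twisted sections of coherent sheaves on $\pn$.
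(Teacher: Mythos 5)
Your proof is correct. Note that the paper itself offers no proof of this proposition --- it is quoted directly from Bohnhorst and Spindler \cite[Proposition 1.4]{BS} --- but your argument (induction on $d$ with Horrocks' splitting criterion as the base case $d=0$; the shift isomorphisms $H^{q}(K_i(k))\cong H^{q+1}(K_{i+1}(k))$ for $1\leq q\leq n-2$ coming from the vanishing of intermediate cohomology of sums of line bundles for $(\Rightarrow)$; and a graded free cover of the finitely generated module $H^0_*(E)$ to force $H^1_*(K_1)=0$ for $(\Leftarrow)$) is exactly the standard argument underlying their result, and every index range you state checks out. You also correctly identify the one genuinely delicate point, namely that the surjection $F_0\to E$ must be surjective on $H^0$ of every twist, not merely at the sheaf level, for the induction to close.
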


\begin{proposition}
Let $E$ be a non splitting vector bundle on $\pn$. Then
$$ {\rm rk}(E) \geq n+1 - {\rm hd}(E).$$
\end{proposition}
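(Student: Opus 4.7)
The plan is to argue by contradiction. Set $r = {\rm rk}(E)$ and $d = {\rm hd}(E)$, and suppose $r + d \leq n$, equivalently $d \leq n - r$. My strategy is to use Proposition~\ref{bsprop} to produce enough intermediate cohomology vanishing for $E$, translate this into a depth bound on the associated graded module of sections, and then apply the Evans--Griffith syzygy theorem to force $E$ to split, contradicting the non-splitting hypothesis.

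From the bound $d \leq n - r$, Proposition~\ref{bsprop} gives $H^{q}_{*}(E) = 0$ for all $1 \leq q \leq n - d - 1$, and since $n - d - 1 \geq r - 1$, in particular $H^{q}_{*}(E) = 0$ for $1 \leq q \leq r - 1$. Set $R = k[x_0, \ldots, x_n]$ and let $M = \bigoplus_{t \in \Z} H^0(\pn, E(t))$, viewed as a finitely generated graded $R$-module. Because $E$ is locally free, $M$ is saturated, so Grothendieck's comparison $H^{i}_{\mathfrak{m}}(M) \cong H^{i-1}_{*}(E)$ for $i \geq 2$ translates the cohomological vanishing above into the depth estimate ${\rm depth}_R(M) \geq r + 1$. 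By Auslander--Buchsbaum, $M$ is then an $(r+1)$-th syzygy of rank $r$.

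Since ${\rm rank}(M) = r < r + 1$, the Evans--Griffith syzygy theorem forces $M$ to be free, so that $E$ splits as a direct sum of line bundles --- a contradiction.

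The main obstacle is the closing invocation of the Evans--Griffith syzygy theorem; the preceding steps amount essentially to translating the cohomological vanishing produced by Proposition~\ref{bsprop} into the module-theoretic language of depth. Horrocks' splitting criterion alone is not strong enough here, since it requires $H^{q}_{*}(E) = 0$ over the full range $1 \leq q \leq n - 1$, whereas Proposition~\ref{bsprop} only yields vanishing in the sub-range $1 \leq q \leq r - 1$. It is precisely this gap between $r-1$ and $n-1$ that the Evans--Griffith syzygy theorem is sharp enough to close for low-rank bundles.
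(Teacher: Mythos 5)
The paper does not actually prove this Proposition: it is quoted directly from Bohnhorst--Spindler \cite[Corollary 1.7]{BS}, so there is no internal proof to compare yours against. Your argument is, in substance, the standard (and essentially the original) one: use the cohomological characterization of ${\rm hd}(E)$ to obtain intermediate cohomology vanishing in the range $1 \leq q \leq r-1$, pass to the graded module of sections $M = \Gamma_{*}(E)$, and invoke the Evans--Griffith syzygy theorem to force $M$ to be free once ${\rm rank}(M) = r < r+1$. The overall structure is sound, and you correctly identify Evans--Griffith as the essential input that Horrocks' splitting criterion cannot replace.

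One step is misattributed and, as literally written, is a non sequitur: ``By Auslander--Buchsbaum, $M$ is then an $(r+1)$-th syzygy of rank $r$.'' Auslander--Buchsbaum converts ${\rm depth}(M) \geq r+1$ into ${\rm pd}(M) \leq n-r$; it says nothing about $M$ being a high syzygy. What is actually needed is the Evans--Griffith (Auslander--Bridger) characterization of syzygies: over a regular (or Gorenstein) ring, a finitely generated module $M$ is a $k$-th syzygy if and only if ${\rm depth}\, M_{\mathfrak{p}} \geq \min(k, \dim R_{\mathfrak{p}})$ for every prime $\mathfrak{p}$. That criterion does apply here, because $E$ being locally free makes $M_{\mathfrak{p}}$ free for every $\mathfrak{p}$ other than the irrelevant maximal ideal, where your depth bound ${\rm depth}(M) \geq r+1$ does the rest. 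With that correction (and the trivial remark that $r=1$ forces $E$ to be a line bundle, hence split, so one may assume $r \geq 2$ and the range $1 \leq q \leq r-1$ is nonempty), the proof is complete.
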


This last Proposition tell us that in order to construct vector bundles of low rank, one must be able to study and construct vector bundles with high homological dimension; this is the main point of our paper. 

Consider the short exact sequences from the resolution (\ref{defresol}).
$$\xymatrix{0 \ar[r] & S_{i+1} \ar[r] & F_i \ar[r] & S_i \ar[r] & 0 }$$
for $i = 0, \cdots, d_1$, where $S_d = F_d $ and $S_0 = E$. The vector bundle $S_i$ is called the $i-$th {\it syzygy} of $E$. 

We say $E$ has a {\it pure resolution} of type $(d_0, \cdots, d_p) $, with $d_0 < d_1 < \cdots < d_p$,  if it is given by 
\begin{equation}\label{pure-res}
\xymatrix{
0 \ar[r] & \opn(-d_p)^{\beta_p} \ar[r] & \cdots \ar[r] & \opn(-d_1)^{\beta_1} \ar[r] & \opn(-d_0)^{\beta_0} \ar[r] & E \ar[r] & 0} \end{equation}
up to twist, where $\beta_i$ is the $i-$th total Betti number of $E, i=0, \cdots, p.$ 

This definition is analogous to the definition of pure resolutions of modules over a commutative ring. For references on this topic, see \cite{BoijSod}. Pure resolutions are being studied recently by many authors, see for instance Boij and Soderberg \cite{BoijSod}, Eisenbud, Floystad and Weyman \cite{Eins,Floy}, Eisenbud and Schreyer \cite{ES} and the references therein. Several conjectures about pure resolutions are proposed and proved in these articles. Such problems are, however, beyond the scope of this paper. 

The first main result of this paper, proved in Section \ref{simple}, states that one can construct simple vector bundles of rank $n$ on $\pn$ given by pure resolutions with any homological dimension.

\begin{theorem}\label{anyhd}
Let $n \in \mathbb{Z}$ be an integer with $n \geq 4$. For each $1 \leq l \leq n-1$, there exists a simple vector bundle $E$ on $\pn$, with rank $n$ and ${\rm hd}(E) = l$, given by resolution
$$\xymatrix{
0 \ar[r]& \opn(-d_0) \ar[r]^{\alpha_0} & \opn^{n+1} \ar[r]^>>>>{\alpha_1} & \opn(d_2)^{2n} \ar[r]^{\alpha_2} & \ldots \\
\ldots \ar[r]^<<<{\alpha_{d_{l-1}}} & \opn(d_l)^{2n} \ar[r]^>>>>{\alpha_l}& E  \ar[r] & 0
}$$
for integers $d_0, d_1, \cdots, d_l$ with  $d_0 > 0$, $d_1 = 0 < d_2 < \cdots < d_l$.
\end{theorem}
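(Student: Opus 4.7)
The plan is to construct $E$ by building its syzygies from left to right, starting with the leftmost term $\opn(-d_0)$ of the prospective resolution and successively taking cokernels of generic morphisms. Concretely, set $S_l := \opn(-d_0)$, and define inductively $S_i := \mathrm{coker}\bigl(S_{i+1} \to F_i\bigr)$, where $F_{l-1} = \opn^{n+1}$ and $F_i = \opn(d_{l-i})^{2n}$ for $0 \leq i \leq l-2$, with the morphism $S_{i+1} \to F_i$ chosen generically. Splicing the resulting short exact sequences
\[ 0 \longrightarrow S_{i+1} \longrightarrow F_i \longrightarrow S_i \longrightarrow 0 \]
yields the desired resolution of $E := S_0$. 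Simplicity of each $S_i$ will be extracted from Brambilla's theorem --- which, via the alternative quiver-theoretic proof given later in this paper, guarantees that a generic morphism between simple bundles satisfying suitable $\mathrm{Ext}$-vanishings has a simple cokernel.

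For the base case, $S_l = \opn(-d_0)$ is a line bundle and hence simple. The first cokernel $S_{l-1}$ arises from a generic injection $\opn(-d_0) \hookrightarrow \opn^{n+1}$ given by $n+1$ forms of degree $d_0 \geq 1$ with no common zero; it is a rank-$n$ vector bundle, simple by Brambilla applied to the pair $(\opn(-d_0),\opn)$. For the inductive step, assume $S_{i+1}$ has been constructed as a simple vector bundle fitting into the correct partial resolution; by choosing $d_{l-i}$ sufficiently large (relative to the previously chosen twists) one arranges that (i) $\mathrm{Hom}(S_{i+1}, F_i)$ is positive-dimensional, (ii) a generic $\varphi \in \mathrm{Hom}(S_{i+1}, F_i)$ is a fiberwise injection so that $\mathrm{coker}\,\varphi$ is locally free of rank $n$, and (iii) the $\mathrm{Ext}$-vanishings required by Brambilla's theorem for $(S_{i+1}, F_i)$ hold. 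Items (ii) and (iii) can be verified from the cohomology of line bundles on $\pn$ combined with the inductive description of $S_{i+1}$, since $F_i$ is a direct sum of copies of a single line bundle.

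Once all syzygies are in hand, a rank count in the spliced sequence shows that $\mathrm{rk}(E)=n$, and by construction the resolution has length $l$, giving $\mathrm{hd}(E) \leq l$. The reverse inequality $\mathrm{hd}(E) \geq l$ follows from Proposition \ref{bsprop} by verifying that the appropriate intermediate $H^q_*(E)$ is nonzero, which can be read off from the resolution and the vanishing of $H^q_*(\opn(m))$ for $1 \leq q \leq n-1$. The main obstacle is the uniform choice of twists $d_0, d_2, \ldots, d_l$: they must be selected compatibly so that all the hypotheses of Brambilla's theorem hold simultaneously at every stage of the induction, and the generic morphisms at each step have the required injectivity and locally-free-cokernel properties. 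The assumption $n \geq 4$ enters precisely here, providing enough cohomological room on $\pn$ for such a compatible choice to exist.
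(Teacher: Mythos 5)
Your plan follows essentially the same route as the paper: iterated generic cokernels starting from $\opn(-d_0)\to\opn^{n+1}$, with simplicity at each stage supplied by Brambilla's theorem (Theorem \ref{decon}) and the homological dimension pinned down via Proposition \ref{bsprop}. The one quantitative point to make explicit is that you need $h^0(S_{i+1}^{*}(d_{l-i}))\geq 2n$ (not merely positive) so that the Tits form satisfies $q(1,2n)\leq 1$; your ``sufficiently large'' choice of twist does provide this.
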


This Theorem is proved by induction on $l$ using a result originally established by Brambilla in \cite[Theorem 4.3]{Bb}; for the sake of completeness, we provide a new proof of Brambilla's theorem, albeit using different methods, namely a functorial correspondence between the so-called cokernel bundles and representations of a certain quiver, in Section \ref{quivers}.

The second goal of our paper, which also arises as an application of Brambilla's theorem, is trying to determine when the syzygies in a pure resolution of the form (\ref{pure-res}) are simple vector bundles. We prove two results in this direction.

First, let $R = k[x_0, \cdots, x_n]$ be the ring of polynomials in $n+1$ variables, and let $\{f_1, \cdots, f_{n+1}\}$ be a generic regular sequence of forms of degree $d$. Let $I = (f_1, \cdots, f_{n+1})$ be the ideal generated by the forms and $\pn = {\rm Proj}(R)$. Sheafifying the corresponding Koszul complex we have the following minimal free resolution.

\begin{eqnarray}\label{complexokosz}
\xymatrix{0 \ar[r] & \opn(-(n+1)d) \ar[r]^{\alpha^{t}} & \opn(-nd)^{n+1} \ar[r] & \ldots \ar[r]  & \opn(-d)^{n+1} \ar[r] & \opn \ar[r] & 0}
\end{eqnarray}
where $\alpha : \opn(-d)^{n+1} \rightarrow \opn$ is the map given by the forms. Our next result, proved in Section \ref{syzkoszul}, guarantees that its syzygies are simple vector bundles.

\begin{theorem}{\label{kos1}}
Consider the short exact sequences in complex (\ref{complexokosz}).

$$\xymatrix{ 0 \ar[r] & \opn(-(n+1)d) \ar[r] & \opn(-nd)^{\binom{n+1}{n}} \ar[r] & F_1 \ar[r] & 0 }$$

$$\xymatrix{0 \ar[r] & F_1 \ar[r] & \opn(-(n-1)d)^{\binom{n+1}{n-1}} \ar[r] & F_2 \ar[r] & 0 }$$

$$\vdots$$

$$\xymatrix{0 \ar[r] & F_{n-2} \ar[r] & \opn(-2d)^{\binom{n+1}{2}} \ar[r] & F_{n-1} \ar[r] & 0 }$$

$$\xymatrix{0 \ar[r] & F_{n-1} \ar[r] &  \opn(-d)^{n+1}  \ar[r] & \opn \ar[r] & 0}$$ 

Each $F_i$ is a simple vector bundle of rank ${\rm rk}(F_i) = \binom{n}{i}$, $1 \leq i \leq n-1$.
\end{theorem}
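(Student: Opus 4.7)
The rank calculation goes by induction on $i$. One has $\text{rk}(F_1) = (n+1) - 1 = n = \binom{n}{1}$, and from the $i$-th short exact sequence
$$0 \to F_{i-1} \to \opn(-(n+1-i)d)^{\binom{n+1}{n+1-i}} \to F_i \to 0,$$
additivity of rank gives $\text{rk}(F_i) = \binom{n+1}{n+1-i} - \text{rk}(F_{i-1})$; together with Pascal's rule $\binom{n+1}{n+1-i} = \binom{n}{n+1-i} + \binom{n}{n-i}$, this yields $\text{rk}(F_i) = \binom{n}{i}$. The $F_i$ are locally free because $\{f_1,\ldots,f_{n+1}\}$ is a regular sequence, so the Koszul complex \eqref{complexokosz} is exact and the cokernels at each step are vector bundles of the claimed rank.

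For simplicity, I proceed by induction on $i$, in each case presenting $F_i$ (up to a global twist) as a cokernel bundle in the sense of Brambilla \cite[Theorem 4.3]{Bb} and invoking that theorem. For the base case, twisting the first sequence by $\opn(nd)$ exhibits $F_1(nd)$ as the cokernel of a generic map $\opn(-d) \to \opn^{n+1}$; since $\opn(-d)$ and $\opn$ are simple with $\text{Hom}(\opn,\opn(-d))=0$, and since the map is generic by the genericity of the regular sequence $\{f_1,\ldots,f_{n+1}\}$, Brambilla's theorem yields that $F_1$ is simple. For the inductive step, assume $F_{i-1}$ is simple. After twisting the $i$-th sequence by $\opn((n+1-i)d)$, we realize a twist of $F_i$ as the cokernel of a map from a twist of $F_{i-1}$ into a direct sum of copies of $\opn$; Brambilla's theorem then applies provided one verifies (a) simplicity of $F_{i-1}$ (the inductive hypothesis), (b) the appropriate $\text{Hom}$ and $\text{Ext}^1$ vanishings between $F_{i-1}$ and the target line bundle, and (c) genericity of the defining syzygy map, which propagates from the genericity of the original forms.

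\textbf{Main obstacle.} The principal difficulty is verifying, at each step of the induction, the genericity and cohomological hypotheses required by Brambilla's theorem. Concretely, one must control the groups $\text{Hom}(F_{i-1},\opn(-(n+1-i)d))$ and $\text{Ext}^1(F_{i-1},\opn(-(n+1-i)d))$, and ensure that the syzygy map involved is sufficiently general. The first can in principle be extracted from the Koszul complex itself, by splitting \eqref{complexokosz} into its constituent short exact sequences and chasing the resulting long exact sequences in cohomology against twists of $\opn$ (using the known vanishing $H^q_*(\opn)=0$ for $0<q<n$, together with the self-duality of the Koszul complex up to twist). The genericity of the syzygy maps is the more delicate point, and is where the assumption that the regular sequence $\{f_1,\ldots,f_{n+1}\}$ is generic enters in an essential way.
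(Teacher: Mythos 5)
Your outline coincides with the paper's strategy: induction on $i$, realizing a twist of $F_i$ as a cokernel bundle with $E=F_{i-1}((n-i+1)d)$ and $F=\opn$, and invoking Brambilla's theorem; your rank computation is the same, and your base case also works (the paper instead gets simplicity of $F_1$ from stability via \cite[Theorem 2.7]{BS}, but your route is legitimate because for $i=1$ the map really is the tuple of generic forms, hence genuinely generic in ${\rm Hom}(\opn(-d),\opn^{n+1})$). The gap is in the inductive step, exactly at the point you label ``the more delicate point'' and then leave unresolved. Theorem \ref{decon}(ii) only produces some non-empty open set $U\subset{\rm Hom}(E^a,F^b)$ of maps with simple cokernel; it says nothing about the particular map you are handed. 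For $i\geq 2$ the Koszul differential $\overline{F}_{i-1}\to\opn^{\binom{n+1}{i}}$ is completely determined by $f_1,\dots,f_{n+1}$ and is in no sense a generic element of that Hom space, so genericity does not ``propagate from the genericity of the original forms''; asserting that it does is the missing idea, not a routine verification.

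The paper closes this gap by an exact dimension count: it proves $h^0(\overline{F}_{i-1}^{\,*})=\binom{n+1}{i}$, i.e.\ $\dim{\rm Hom}(\overline{F}_{i-1},\opn)$ equals the number $b$ of copies of $\opn$ in the target. A map $\overline{F}_{i-1}\to\opn^{b}$ is a $b$-tuple in this Hom space, and when $b$ equals its dimension, any two tuples that are bases differ by an element of $GL(b)$ acting on the target and therefore have isomorphic cokernels; the Koszul differential's components form a basis (minimality of the resolution), so its cokernel is isomorphic to that of the generic map, which is simple by Lemma \ref{simpquiv}. This is the content of the paper's identification $\overline{F}_i^{\,*}\simeq F_{n-i}(id)\simeq F_i((n-i+1)d)$. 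Note also that the paper first reduces to $1\le i\le\frac{n-1}{2}$ using self-duality of the Koszul complex up to twist; this reduction is what makes the global generation of $\overline{F}_{i-1}^{\,*}$ and the equality $h^0(\overline{F}_{i-1}^{\,*})=\binom{n+1}{i}$ verifiable, and your plan omits it. The ${\rm Hom}$ and ${\rm Ext}^1$ vanishings you defer are indeed obtained by the cohomology chase you describe, so that part of your plan is sound.
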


Now let $J = (f_1, \cdots, f_{\alpha_1})$ be an ideal of $R$ generated by $\alpha_1$ forms of degree $t+1$, such that $R/J$ is a compressed Gorenstein Artinian graded algebra of embedding dimension $n+1$ and socle degree $2t$. Using the Proposition \cite[Proposition 3.2]{MMRN}, sheafifying the minimal free resolution of $R/J$, we have

\begin{eqnarray}\label{compGorenstein}
\xymatrix{ 0 \ar[r] & \opn(-2t-n-1) \ar[r] & \opn(-t-n)^{\alpha_n} \ar[r] & \opn(-t-n+1)^{\alpha_{n-1}} \ar[r] & \cdots }
\end{eqnarray}
$$\xymatrix{
\cdots \ar[r] & \opn(-t-p)^{\alpha_p}  \ar[r] & \cdots \ar[r] &  \opn(-t-2)^{\alpha_2} \ar[r] & \opn(-t-1)^{\alpha_1} \ar[r]^<<<<{\beta} & \opn \ar[r] & 0
}$$ 
where $\beta$ is the map given by the $\alpha_1$ forms of degree $t+1$ and
$$\alpha_i = \binom{t+i - 1}{i-1} \binom{t+n+1}{n+1-i} - \binom{t+n-i}{n+1-i} \binom{t+n}{i-1}, \, for \, i=1, \cdots, n.$$

Finally, applying the same ideas we have the last main result of the paper, proved in Section \ref{syzcgag}.

\begin{theorem}\label{teoGor1}
Consider the short exact sequences on $(\ref{compGorenstein})$.
\begin{eqnarray}\label{seqsimples1}
\xymatrix{0 \ar[r] & \opn(-2t-n-1) \ar[r] & \opn(-t-n)^{\alpha_n} \ar[r] & F_1 \ar[r] & 0}
\end{eqnarray}
$$\xymatrix{0 \ar[r] & F_1 \ar[r] & \opn(-t-n+ 1)^{\alpha_{n-1}} \ar[r] & F_2 \ar[r] & 0}$$
 $$\xymatrix{& &  \vdots & &}$$
$$\xymatrix{0 \ar[r] & F_{n-p} \ar[r] & \opn(-t-p)^{\alpha_{p}} \ar[r] & F_{n-(p-1)} \ar[r] & 0}$$
$$\xymatrix{& & \vdots & & }$$
$$\xymatrix{0 \ar[r] & F_{n-1} \ar[r] & \opn(-t-1)^{\alpha_{1}} \ar[r] & \opn \ar[r] & 0}$$
Then for each $1 \leq i \leq n-1$, $F_i$ is simple.
\end{theorem}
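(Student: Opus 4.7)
The plan is to proceed exactly as in the proof of Theorem~\ref{kos1}: induct on $i$ from $1$ to $n-1$, applying Brambilla's theorem at each step, with the previous syzygy (simple by induction hypothesis) serving as the ``input'' bundle in a cokernel presentation of the next syzygy.

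For the base case, $F_1$ is the cokernel of the map $\opn(-2t-n-1)\to\opn(-t-n)^{\alpha_n}$; after twisting by $\opn(t+n)$ this displays $F_1(t+n)$ as the cokernel of a map $\opn(-t-1)\to\opn^{\alpha_n}$, a cokernel bundle in the standard sense to which Brambilla's theorem applies directly. For the inductive step, assume that $F_i$ is simple. The sequence
$$0\to F_i\to\opn(-t-n+i)^{\alpha_{n-i}}\to F_{i+1}\to 0$$
presents $F_{i+1}$ as the cokernel of a map from the simple bundle $F_i$ to a direct sum of line bundles; Brambilla's theorem then yields that $F_{i+1}$ is simple, provided the relevant cohomological hypotheses are satisfied.

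The main obstacle is verifying those hypotheses at each step of the induction---typically vanishing of $\mathrm{Hom}(\opn(-t-n+i),F_i)$ together with suitable numerical bounds on $\mathrm{Ext}^1$---which requires control of the cohomology of twists of each $F_i$. These groups can be computed inductively from the defining short exact sequences, using the known cohomology of $\opn(k)$ and the explicit combinatorial formula for the Betti numbers $\alpha_p$. The compressed hypothesis on $R/J$ is essential here: it forces the Hilbert function to attain its maximal value, which translates into the numerical inequalities Brambilla's theorem requires, in direct parallel to the role played by genericity of the regular sequence in Theorem~\ref{kos1}.

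As an auxiliary structural observation, the Gorenstein self-duality of the resolution induces an isomorphism $F_i^{\vee}\cong F_{n-i}(2t+n+1)$; since simplicity is preserved under dualization and twisting, this halves the work (one only needs to run the induction up to $i=\lfloor n/2\rfloor$) and supplies a consistency check on the cohomology computations in the inductive step.
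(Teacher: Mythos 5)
Your overall strategy (induction on the syzygies, Brambilla's theorem at each step, self-duality to halve the work) is the same as the paper's, but there is a genuine gap that your proposal does not confront and that the paper explicitly identifies as the central difficulty: Theorem \ref{decon}(ii) only asserts simplicity of the cokernel for maps in some non-empty \emph{open} subset $U\subset{\rm Hom}(E^a,F^b)$, whereas the maps occurring in (\ref{compGorenstein}) are the specific, non-generic differentials of the minimal free resolution of $R/J$. The paper says this outright: ``We can not apply Theorem \ref{decon} because the map $\beta$ is not generic. Thus we need an ad hoc proof.'' In particular your base case fails as written: $F_1$ is the cokernel of $\opn(-2t-n-1)\to\opn(-t-n)^{\alpha_n}$, a map given by $\alpha_n$ \emph{particular} forms of degree $t+1$ spanning a proper subspace of $H^0(\opn(t+1))$ (in general $\alpha_n<h^0(\opn(t+1))$, e.g.\ $14<15$ for $n=4$, $t=1$), so nothing guarantees it lies in $U$. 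The paper instead proves the base case directly: by self-duality $F_1\simeq F_{n-1}^{*}(-2t-n-1)$, and applying ${\rm Hom}(-,F_{n-1})$ to the last short exact sequence reduces simplicity of $F_{n-1}$ to $H^0(F_{n-1}(t+1))=0$, which follows by climbing the resolution with the long exact sequences of cohomology.

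The same genericity issue arises in your inductive step, and the paper's resolution of it is the one substantive ingredient missing from your sketch, namely Lemma \ref{lalpha}: $h^0(F_{k-1}^{*}(-t-n+k-1))=\alpha_{n-k+1}$ \emph{exactly}. This does two things at once. First, it gives $q(1,\alpha_{n-k+1})=1+\alpha_{n-k+1}^2-\alpha_{n-k+1}\, h^0(F_{k-1}^{*}(-t-n+k-1))=1$, so Theorem \ref{decon}(ii) applies to a \emph{generic} map $F_{k-1}\to\opn(-t-n+k-1)^{\alpha_{n-k+1}}$ and its cokernel $\overline{F}_k$ is simple. Second, and crucially, because the target has exactly $\dim{\rm Hom}(F_{k-1},\opn(-t-n+k-1))$ summands, the generic cokernel can be identified (after dualizing and twisting) with the actual syzygy $F_{n-k}$ of $\beta$; this is how simplicity is transferred from a generic cokernel bundle to the specific bundle sitting in the resolution. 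Your appeal to ``the relevant cohomological hypotheses'' covers the verification of conditions (1)--(5) in the definition of cokernel bundle, and your remark about the compressed hypothesis gestures at the right numerics, but neither supplies this identification, without which Brambilla's theorem says nothing about the particular syzygies the theorem is about.
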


\noindent{\bf Acknowledgments.} 
The first named author is partially supported by the CNPq grant number 302477/2010-1 and the FAPESP grant number 2011/01071-3. The second named author was supported by the FAPESP doctoral grant number 2007/07469-3, and is now supported by the FAPESP post-doctoral grant 2011/21398-7. We thank Rosa Maria Mir\'o-Roig and Laura Costa for their invaluable help. DP thanks the hospitality at the University of Barcelona.


\section{Representations of quivers and cokernel bundles}\label{quivers}

In this section we want to show how cokernel bundles can be related to representations of the so-called Kronecker quiver, and provide a new proof of Brambilla's result \cite{Bb}.

\subsection{Representations of quivers}
We begin by revising some basic facts about representations of quivers. Let $k$ be an algebraically closed field with characteristic zero.  A {\it quiver} $Q$ consists on a pair $(Q_0, Q_1)$ of sets where $Q_0$ is the set of vertices and $Q_1$ is the set of arrows and a pair of maps $t, h: Q_1 \rightarrow Q_0$ the tail and head maps. An example is the \emph{Kronecker quiver}, denoted $K_{n}$, which consists of $2$ vertices and $n$ arrows.

\begin{equation} \label{kronquiver}
\xymatrix{
\bullet \ar@<1.8ex>[r]^1 \ar@<-1.8ex>[r]^{\vdots}_n & \bullet
}
\end{equation}

A {\it representation} $A = (\{V_i\}, \{A_a\})$of $Q$ consists of a collection of finite dimensional $k-$vector spaces $\{V_i; i \in Q_0 \}$ together with a collection of linear maps $ \{A_a : V_{t(a)} \rightarrow V_{h(a)}; a \in Q_1\}$.  A {\it morphism} $f$ between two representations $A = (\{V_i\}, \{A_a\})$ and $ B = (\{W_i\}, \{B_a\})$ is a collection of linear maps $\{f_i\}$ such that for each $a \in Q_1$ the diagram bellow is commutative
$$\xymatrix{V_{t(a)} \ar[r]^{A_a} \ar[d]_{f_{t(a)}} & V_{h(a)} \ar[d]^{f_{h(a)}}\\
W_{t(a)} \ar[r]_{B_a} & W_{h(a)}}$$ 
With these definitions, representations of $Q$ form an abelian category hereby denoted by $Rep(Q)$. 

The \emph{Euler form} on $\mathbb{Z}^{Q_0}$ is a bilinear form associated to $Q$, given by
$$< \alpha, \beta>  = \sum_{i \in Q_0} \alpha_i \beta_i  - \sum_{a \in Q_1} \alpha_{t(a)} \beta_{h(a)}.$$ 
The \emph{Tits form} is the corresponding quadratic form, given by
$$q(\alpha) = <\alpha, \alpha>.$$
For instance, if $Q = K_n$ and $\alpha = (a,b) \in \mathbb{Z}^2$ the Tits form is $q(\alpha) = a^2 + b^2 - nab$.

To a given representation $A$ of $Q$ we associate a dimension vector $\alpha \in \mathbb{Z}^{Q_0}$ whose entries are $\alpha_i = \dim V_i$. We say that a dimension vector $\alpha \in \mathbb{Z}^{Q_0}$ is \emph{Schur root} if there exists a representation $A$ with dimension vector $\alpha$ such that ${\rm Hom}(A,A) = k$. Note that the condition ${\rm Hom}(A,A) = k$ is an open condition in the affine space of all representations with fixed dimension vector, thus if $\alpha$ is a Schur root, then ${\rm Hom}(A,A) = k$ for a generic representation with dimension vector $\alpha$. A reference for generic representations and Schur roots is \cite{S}. For more information about roots and root systems, we refer to $\cite{K}$. 

The following two facts will be very relevant in what follows. 

\begin{lemma}\label{Kac}
For any quiver $Q$, if $\alpha$ is a dimension vector satisfying $q(\alpha) > 1$, then every representation with dimension vector $\alpha$ is decomposable.
\end{lemma}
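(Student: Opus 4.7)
The plan is to deduce this lemma as a direct consequence of Kac's classification theorem for indecomposable representations of quivers, together with the basic dichotomy between real and imaginary roots. First, I would recall Kac's theorem: for any quiver $Q$, the dimension vectors of the indecomposable representations of $Q$ are exactly the positive roots of the Kac--Moody root system associated to the underlying graph of $Q$ (the relevant reference is Kac's original paper on infinite root systems, cited in the paper as \cite{K}).

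Second, I would recall the fundamental dichotomy satisfied by positive roots $\alpha$: either $\alpha$ is a real root, in which case $q(\alpha) = 1$, or $\alpha$ is an imaginary root, in which case $q(\alpha) \leq 0$. In either case one has $q(\alpha) \leq 1$. Consequently, if $\alpha$ is a dimension vector with $q(\alpha) > 1$, then $\alpha$ cannot be a positive root, and so Kac's theorem guarantees that there is no indecomposable representation of $Q$ with dimension vector $\alpha$.

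To conclude, I would invoke the Krull--Schmidt property of the abelian category $\rep(Q)$: every finite-dimensional representation decomposes as a direct sum of indecomposable representations. Let $A$ be any representation with dimension vector $\alpha$ satisfying $q(\alpha) > 1$. If $A$ were indecomposable, its dimension vector would be a positive root of $Q$, contradicting the previous step. Hence $A$ must split nontrivially as a direct sum, which is precisely the assertion that $A$ is decomposable.

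The main conceptual obstacle is not in the argument itself (which is a one-line corollary once the right theorems are in hand) but in correctly citing Kac's theorem and the real/imaginary root dichotomy; beyond that, Krull--Schmidt for $\rep(Q)$ is standard. No computational work is required.
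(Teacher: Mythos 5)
Your argument is correct: Kac's theorem (that the dimension vectors of indecomposable representations are exactly the positive roots, which satisfy $q(\alpha)=1$ if real and $q(\alpha)\leq 0$ if imaginary) immediately rules out an indecomposable representation in dimension $\alpha$ when $q(\alpha)>1$, and since $q(\alpha)>1$ forces $\alpha\neq 0$, any such representation is nonzero and hence decomposable. The paper itself offers no proof of this lemma, simply quoting it as a known fact with a pointer to \cite{K}, so your derivation is precisely the intended justification rather than a genuinely different route.
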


\begin{proposition}\label{schur} 
Let $Q$ be the Kronecker quiver with $n \geq 3$, and let $\alpha\in\mathbb{Z}^{2}$ be a dimension vector. If $q(\alpha) \leq 1 $, then $\alpha$ is a Schur root. In particular, if $q(\alpha)\leq 1$, then the generic representation of $Q$ with dimension vector $\alpha$ is indecomposable.
\end{proposition}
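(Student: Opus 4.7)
The plan is to combine the action of the Weyl group of $K_n$ on dimension vectors with the Bernstein-Gelfand-Ponomarev reflection functors on $\mathrm{Rep}(K_n)$. At the level of dimension vectors the simple reflections
$$\sigma_1(a,b)=(nb-a,b), \qquad \sigma_2(a,b)=(a,na-b)$$
preserve the Tits form $q$, and their categorifications $S_i^{\pm}$ are equivalences on the full subcategories of representations having no simple summand at vertex $i$; in particular they preserve indecomposability and the Schur property $\mathrm{End}(A)=k$. The hypothesis $n\geq 3$ places us in the wild regime, where the fundamental region $F=\{(a,b):\ a,b\geq 1,\ 2a\leq nb,\ 2b\leq na\}$ is non-trivial and entirely contained in $\{q\leq 0\}$.

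I would split into two cases. If $q(\alpha)=1$, then $\alpha$ is a real root and cannot lie in $F$, so one of $\sigma_1(\alpha),\sigma_2(\alpha)$ has strictly smaller $\ell^1$-norm than $\alpha$. Iterating this reduction in finitely many steps brings $\alpha$ to a base case $(1,0)$ or $(0,1)$, which correspond to the two simple representations of $K_n$ and are tautologically Schur; reversing the chain through the reflection functors then produces a Schur representation with dimension vector $\alpha$. If instead $q(\alpha)\leq 0$, the same Weyl-group reduction moves some element of the orbit $W\cdot\alpha$ into $F$, and because the functors $S_i^{\pm}$ preserve Schurness it is enough to exhibit a Schur representation for each $\alpha=(a,b)\in F$.

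The main obstacle is this last construction. A dimension count shows that the representation variety $R(\alpha)=\mathrm{Mat}_{b\times a}(k)^n$ has dimension $nab$, while $G(\alpha)=\mathrm{GL}(a)\times\mathrm{GL}(b)$, of dimension $a^2+b^2$, acts on $R(\alpha)$ with the diagonal $k^{\times}$ contained in every stabiliser; the locus $\{A:\dim\mathrm{End}(A)\geq 2\}$ is closed in $R(\alpha)$, and the task is to show it is proper, which (given $q(\alpha)\leq 0$) is consistent with a dense open Schur locus. I would try to produce one explicit $n$-tuple $(A_1,\ldots,A_n)$ of $b\times a$ matrices whose only solutions to $A_i f_1=f_2 A_i$, $i=1,\ldots,n$, are scalar pairs---for instance by arranging $A_1$ to have maximal rank and then using the remaining $n-1\geq 2$ matrices to eliminate all non-scalar intertwiners via a generic choice argument. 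The hypothesis $n\geq 3$ is essential here, since for $n\leq 2$ the fundamental region degenerates and the imaginary root region admits no Schurian representation. Finally, the ``in particular'' clause is immediate: a Schur representation has local endomorphism ring $k$ and is therefore indecomposable, and Schurness is an open condition on $R(\alpha)$, so the generic representation of dimension vector $\alpha$ is indecomposable.
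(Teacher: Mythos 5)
First, a point of reference: the paper does not prove Proposition \ref{schur} at all --- it is quoted as a known fact with pointers to Schofield \cite{S} and Kac \cite{K}, so there is no in-paper argument to compare yours against. Judged on its own terms, your proposal is sound in its first half but has a genuine gap in the second. The real-root case ($q(\alpha)=1$) is fine: the Weyl-group descent to a simple root together with the BGP reflection functors (applied alternately at the sink and the source of $K_n$, with the standard check that no intermediate representation has a simple summand at the reflected vertex) does produce a brick of the given dimension, and this is the classical argument.

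The gap is exactly where you say ``the main obstacle'' lies, and it is not a removable obstacle by the means you sketch: for $\alpha$ in the fundamental region with $q(\alpha)\leq 0$ you must actually \emph{exhibit} a representation with $\mathrm{End}(A)=k$, and nothing in your write-up does this. The dimension count $nab > a^2+b^2-1$ only says that a dense Schur locus is not ruled out; since the locus $\{\dim\mathrm{End}(A)\geq 2\}$ is closed, openness of the Schur locus is automatic and the entire content of the proposition is its non-emptiness. ``Using the remaining $n-1\geq 2$ matrices to eliminate all non-scalar intertwiners via a generic choice argument'' is a restatement of the goal, not a proof: a generic choice argument needs a single witness, and constructing one for arbitrary $(a,b)$ in the fundamental region (e.g.\ with $a\neq b$, where you cannot normalize $A_1$ to the identity and reduce to commutants of a single matrix) is precisely the nontrivial part of Kac's theorem that every anisotropic imaginary root is a Schur root. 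Either supply such a witness (for instance by an induction on $a+b$ using Schofield's theory of general subrepresentations, or an explicit matrix construction), or cite the result; as written the imaginary-root case is asserted rather than proved. A minor side remark: your claim that for $n\leq 2$ ``the imaginary root region admits no Schurian representation'' is false for $(1,1)$ on $K_2$, which is an isotropic imaginary root and is Schur; the correct dividing line is isotropy ($q(\alpha)=0$ with $\alpha$ divisible), which for $n\geq 3$ simply does not occur since $a^2-nab+b^2=0$ has no nonzero integer solutions.
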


\subsection{Cokernel bundles}
The next definition is due to Brambilla in \cite{Bb}. Fix for simplicity $k = \mathbb{C}$. Let $E$ and $F$ be vector bundles on $\pn$, $n \geq 2$, satisfying the following conditions:

\begin{itemize}
\item[$(1)$] $E$ and $F$ are simple, that is, $\rm{Hom}(E,E) = \rm{Hom}(F,F) = \mathbb{C}$;
\item[$(2)$] ${\rm Hom}(F,E) =0;$
\item[$(3)$] ${\rm Ext}^{1}(F,E) = 0$;
\item[$(4)$] the sheaf $E^{*} \otimes F$ is globally generated;
\item[$(5)$] $W = {\rm Hom}(E, F)$ has dimension $w \geq 3$.
\end{itemize}

\begin{definition}
A \emph{cokernel bundle of type $(E,F)$} on $\pn$ is a vector bundle $C$ with resolution of the form
\begin{eqnarray}\label{cok}
\xymatrix{ 0  \ar[r] & E^{\oplus a} \ar[r]^{\alpha} & F^{\oplus b} \ar[r] & C \ar[r]& 0 }
\end{eqnarray}  
where $E, F$ satisfy the conditions (1) through (5) above, $a,b \in \mathbb{N}$ and $b \,{\rm{rk}} F - a \,{\rm rk} E \geq n$.
\end{definition}

To simplify the notation we will write $E^a$ instead of $E^{\oplus a}$ and so on. Cokernel bundles  of type $(E,F)$ form a full subcategory of the category of coherent sheaves on $\pn$; this category will be denoted by $\mathcal{C}(\mathbb{P}^n)$. Below we have some examples of cokernel bundles.

\begin{enumerate}
\item Bundle given by exact sequence
$$\xymatrix{0 \ar[r] & \opn(-d)^a \ar[r]^{\alpha} & \opn^b \ar[r] & C \ar[r] & 0}$$
with $d \geq 1$. The map $\alpha$ is given by a matrix of forms of degree $d$.

\item The bundle
$$\xymatrix{ 0 \ar[r] & \Omega^p(p)^c \ar[r]^{\alpha} & \opn^b \ar[r] & C \ar[r] & 0}$$
where $0 < p \leq n$ and $\alpha$ is a matrix given by $p-$forms.
\end{enumerate}

Let us now see how cokernel bundles are related to quivers. Fix a basis $\sigma = \{\sigma_1, \cdots, \sigma_w\}$ of ${\rm Hom}(E,F)$. 

\begin{definition}
A representation $A = (\{\mathbb{C}^a, \mathbb{C}^b\}, \{A_i\}_{i=1}^w)$ of $K_w$ is $(E,F, \sigma)-$\emph{globally injective} when the map 
$$ \alpha(P) := \sum_{i=1}^w A_i \otimes \sigma_i(P) ~:~ 
\mathbb{C}^a\otimes E_P \to \mathbb{C}^b\otimes F_P $$
is injective for every $P \in \mathbb{P}^n$; here, $E_P$ and $F_P$ denote the fibers of $E$ and $F$ over the point $P$, respectively. 
\end{definition}

$(E,F, \sigma)-$globally injective representations of $K_w$ form a full subcategory of the category of representations of $K_w$; we denote it by $Rep(K_w)^{gi}$. From now on, since $(E,F, \sigma)$ are fixed, we will just refer to globally injective representations. It is a simple exercise to establish the following properties of $Rep(K_w)^{gi}$.

\begin{lemma}\label{subob}
The category $Rep(K_w)^{gi}$ is closed under sub-objects, i.e. every subrepresentation $A'$ of a representation $A$ in $Rep(K_w)^{gi}$ is also in $Rep(K_w)^{gi}$.
\end{lemma}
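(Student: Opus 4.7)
The plan is direct: \emph{globally injective} is a fibrewise linear-algebraic condition on representations, and restricting an injective linear map to a subspace of its domain preserves injectivity, so closure under sub-objects should be essentially immediate once the correct diagram is set up.

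First I would unpack the data of a subrepresentation. If $A = (\{\mathbb{C}^a,\mathbb{C}^b\},\{A_i\}_{i=1}^w)$ lies in $\rep(K_w)^{gi}$, then a subrepresentation $A'$ is specified by subspaces $V_1' \subseteq \mathbb{C}^a$ and $V_2' \subseteq \mathbb{C}^b$ with $A_i(V_1') \subseteq V_2'$ for every $i$, together with the restricted maps $A_i' := A_i|_{V_1'}$. For any point $P \in \pn$, the fibrewise map $\alpha'(P) = \sum_i A_i' \otimes \sigma_i(P)$ then fits into a commutative square with $\alpha(P)$, whose vertical arrows are the natural maps $V_1' \otimes E_P \to \mathbb{C}^a \otimes E_P$ and $V_2' \otimes F_P \to \mathbb{C}^b \otimes F_P$ induced by the inclusions of $V_j'$. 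Since tensoring an injection of vector spaces with a fixed vector space remains injective, both verticals are genuine injections, and commutativity follows from $A_i' = A_i|_{V_1'}$.

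To conclude, I would chase the square: by hypothesis $\alpha(P)$ is injective, and composing with the injective left vertical yields an injection; by commutativity this composite equals the right vertical applied after $\alpha'(P)$, which forces $\alpha'(P)$ itself to be injective. As $P \in \pn$ was arbitrary, $A' \in \rep(K_w)^{gi}$. There is really no obstacle here, so the proof should be essentially one diagram plus a one-line diagram chase; the lemma is just the categorical reformulation of the fact that injectivity of linear maps is preserved under restriction to subspaces.
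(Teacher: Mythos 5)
Your argument is correct and is exactly the ``simple exercise'' the paper has in mind (the paper omits the proof entirely, remarking only that it is easy to establish): identifying a subrepresentation with invariant subspaces, noting that $\alpha'(P)$ is the restriction of $\alpha(P)$ to $V_1'\otimes E_P$ followed by the corestriction to $V_2'\otimes F_P$, and concluding injectivity from injectivity of $\alpha(P)$. No gaps; nothing further is needed.
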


\begin{lemma}
The category $Rep(K_{w})^{gi}$ is exact, i.e. $Rep(K_{w})^{gi}$ is closed under extentions and under direct summands.
\end{lemma}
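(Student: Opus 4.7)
The plan is to verify the two closure properties pointwise at each $P \in \pn$, exploiting the fact that tensoring by a finite-dimensional $\mathbb{C}$-vector space is exact.

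For closure under direct summands, suppose $A = A' \oplus A''$ belongs to $Rep(K_w)^{gi}$, where $A'$ has dimension vector $(a',b')$ and $A''$ has dimension vector $(a'',b'')$. The structure maps decompose as $A_i = A'_i \oplus A''_i$, so at every $P \in \pn$ the fiberwise map $\alpha(P) = \sum_i A_i \otimes \sigma_i(P)$ splits as the block-diagonal direct sum $\alpha'(P) \oplus \alpha''(P)$. Injectivity of a direct sum of linear maps forces injectivity of each summand, so both $A'$ and $A''$ are globally injective.

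For closure under extensions, let $0 \to A' \to A \to A'' \to 0$ be a short exact sequence in $Rep(K_w)$ with $A', A'' \in Rep(K_w)^{gi}$. Exactness in $Rep(K_w)$ holds vertex-by-vertex, so we have short exact sequences $0 \to \mathbb{C}^{a'} \to \mathbb{C}^a \to \mathbb{C}^{a''} \to 0$ and $0 \to \mathbb{C}^{b'} \to \mathbb{C}^b \to \mathbb{C}^{b''} \to 0$ intertwining the triples $A'_i, A_i, A''_i$. Fix $P \in \pn$; tensoring the first sequence by $E_P$ and the second by $F_P$ preserves exactness (since $E_P$ and $F_P$ are $\mathbb{C}$-vector spaces), and the maps $\alpha'(P), \alpha(P), \alpha''(P)$ fit into a commutative diagram with exact rows. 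The snake lemma yields an exact sequence $0 \to \ker\alpha'(P) \to \ker\alpha(P) \to \ker\alpha''(P)$, and the hypothesis that $\alpha'(P)$ and $\alpha''(P)$ are injective forces $\ker\alpha(P) = 0$. Hence $\alpha(P)$ is injective for every $P$, i.e., $A \in Rep(K_w)^{gi}$.

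Neither step presents a genuine obstacle: the argument is essentially a diagram chase in the fiber, made possible by the exactness of tensoring with a vector space and by the fact that exactness in $Rep(K_w)$ reduces to pointwise exactness at each vertex.
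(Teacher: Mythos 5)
Your proof is correct: the fiberwise snake-lemma/diagram-chase argument for extensions and the block-diagonal splitting for direct summands both go through, using that exactness in $Rep(K_w)$ is computed vertex-by-vertex and that tensoring with the finite-dimensional fibers $E_P$, $F_P$ is exact. The paper offers no proof of this lemma (it is declared ``a simple exercise''), and your argument is precisely the natural one intended; note only that closure under direct summands also follows immediately from Lemma \ref{subob}, since a summand is a sub-object.
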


Our next result relates the category of globally injective representations of $K_w$ to the category of cokernel bundles.

\begin{theorem}\label{Teo1}
For every choice of basis $\sigma$ of ${\rm Hom}(E,F)$, there is an equivalence between $Rep(K_w)^{gi}$, the category of $(E,F, \sigma)-$globally injective representations of $K_w$, and $\mathcal{C}(\pn)$ the category of cokernel bundles of type $(E,F)$.
\end{theorem}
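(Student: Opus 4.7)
The plan is to construct an explicit functor $\Phi : Rep(K_w)^{gi} \to \mathcal{C}(\pn)$ and to verify that it is essentially surjective and fully faithful, which suffices for an equivalence of categories. On objects, given a globally injective representation $A = (\{\mathbb{C}^a, \mathbb{C}^b\}, \{A_i\}_{i=1}^{w})$, the chosen basis $\sigma$ assembles the data into the single sheaf map
$$\alpha_A := \sum_{i=1}^{w} A_i \otimes \sigma_i ~:~ E^a \to F^b.$$
Global injectivity means $\alpha_A$ is injective on every fiber, so the Tor long exact sequence ensures its cokernel $C_A$ is locally free; set $\Phi(A) := C_A$. On morphisms, a pair $(f_1, f_2)$ satisfying $B_i f_1 = f_2 A_i$ for every $i$ fits into a commutative square relating $\alpha_A$ and $\alpha_B$, hence descends to the cokernels.

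Essential surjectivity is built into the definition: any $C \in \mathcal{C}(\pn)$ is presented as $0 \to E^a \to F^b \to C \to 0$, and under the canonical isomorphism
$$\mathrm{Hom}(E^a, F^b) \cong \mathrm{Hom}(\mathbb{C}^a, \mathbb{C}^b) \otimes \mathrm{Hom}(E, F)$$
coming from the simplicity of $E$ and $F$, the basis $\sigma$ decomposes the presenting map uniquely as $\alpha = \sum_i A_i \otimes \sigma_i$; the matrices $A_i$ form a representation, which is globally injective because $C$ is locally free.

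The bulk of the work lies in establishing the natural bijection $\mathrm{Hom}_{Rep}(A, B) \simeq \mathrm{Hom}(C_A, C_B)$. For faithfulness, if $(f_1, f_2)$ induces the zero map on cokernels then $f_2 : F^b \to F^{b'}$ factors through $E^{a'} \hookrightarrow F^{b'}$, placing it in $\mathrm{Hom}(F^b, E^{a'}) = 0$ by hypothesis (2); injectivity of $\alpha_B$ then forces $f_1 = 0$. For fullness, given $\phi : C_A \to C_B$, apply $\mathrm{Hom}(F^b, -)$ to the resolution of $C_B$: by hypothesis (3) one has $\mathrm{Ext}^1(F^b, E^{a'}) = 0$, so the composition $F^b \twoheadrightarrow C_A \xrightarrow{\phi} C_B$ lifts to some $f_2 : F^b \to F^{b'}$. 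Then $f_2 \circ \alpha_A$ vanishes in $C_B$ and therefore factors through the kernel $E^{a'}$, producing $f_1 : E^a \to E^{a'}$. Simplicity (hypothesis (1)) identifies $\mathrm{Hom}(E^a, E^{a'})$ and $\mathrm{Hom}(F^b, F^{b'})$ with matrix spaces, and uniqueness of the $\sigma$-decomposition in $\mathrm{Hom}(E^a, F^{b'})$ converts the identity $\alpha_B \circ f_1 = f_2 \circ \alpha_A$ into the required intertwining relations $B_i f_1 = f_2 A_i$.

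The main obstacle is the fullness argument, which combines hypothesis (3) to produce the lift $f_2$ with hypothesis (1) to perform the matrix translation at the end; hypothesis (2) serves a separate role, securing faithfulness by killing any would-be obstruction in $\mathrm{Hom}(F^b, E^{a'})$. The remaining pieces — the construction of $\Phi$ on both objects and morphisms, well-definedness, and essential surjectivity — reduce to formal manipulation with the tensor decomposition $\mathrm{Hom}(E^a, F^b) \cong \mathrm{Hom}(\mathbb{C}^a, \mathbb{C}^b) \otimes \mathrm{Hom}(E, F)$.
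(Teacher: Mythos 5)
Your proposal is correct and follows essentially the same route as the paper: the same functor $A \mapsto \mathrm{coker}\bigl(\sum_i A_i \otimes \sigma_i\bigr)$, the same $\sigma$-decomposition of the presenting map for essential surjectivity, and the same lifting argument via $\mathrm{Hom}(F^b,-)$ together with conditions (2)--(3) and the simplicity of $E$ and $F$ for fullness. The only cosmetic difference is in faithfulness, where you factor $f_2\otimes 1_F$ through $\ker\pi_B \cong E^{a'}$ and invoke $\mathrm{Hom}(F,E)=0$ directly, whereas the paper reuses the isomorphism $\mathrm{Hom}(F^b,F^{b'}) \simeq \mathrm{Hom}(F^b, C_B)$; both are sound.
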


\begin{proof}
Given a basis $\sigma$ of ${\rm Hom}(E,F)$, we construct a functor $L_\sigma : Rep(K_{w})^{gi} \rightarrow \mathcal{C}(\mathbb{P}^n)$ and show that it is essentially surjective and fully faithfull. 

Let $A = (\{\mathbb{C}^a, \mathbb{C}^b\}, \{A_i\}_{i=1}^w)$ be a globally injective representation of $K_{w}$. Define a map $\alpha : E^{a} \rightarrow F^{ b}$ given by 
$$ \alpha = A_1 \otimes  \sigma_1  + \cdots + A_w \otimes \sigma_w. $$
Since $A$ is globally injective, we have that $\dim {\rm coker}\, \alpha(P) = b\, {\rm rk}F - a \, {\rm rk} E$ for each $P \in \mathbb{P}^n$. Therefore $\alpha$ is injective as a map of sheaves, and $C:={\rm coker}\alpha$ is a cokernel bundle.

Now given two globally injective representations $A = (\{\mathbb{C}^a, \mathbb{C}^b\}, \{A_i\}_{i=1}^w)$ and $B = (\{\mathbb{C}^c, \mathbb{C}^d\}, \{B_i\}_{i=1}^w)$, and a morphism $f=(f_1, f_2)$ between them, let $L_\sigma(A) = C_1, L_\sigma(B) = C_2$ be the cokernel bundles and $\alpha_1$, $\alpha_2$ the maps associated to $A$ and $B$, respectively. We want to define a morphism $L_\sigma(f): C_1 \rightarrow C_2$.

Since we have $f_1: \mathbb{C}^a \rightarrow \mathbb{C}^c, f_2: \mathbb{C}^b \rightarrow \mathbb{C}^d$, we have maps $f_{1}^{'} = f_1 \otimes 1_E \in {\rm{Hom}}(E^a,E^c)$ and $f_{2}^{'} = f_2 \otimes  1_F \in {\rm Hom}(F^b, F^d)$. Consider the diagram
\begin{eqnarray}\label{diagg}
\xymatrix{ 0 \ar[r] & E^a \ar[d]_{f_{1}^{'}} \ar[r]^{\alpha_1} & F^b \ar[d]_{f_{2}^{'}} \ar[r]^{\pi_1}&  C_1 \ar[r] \ar@{-->}[d] & 0\\
0 \ar[r] & E^c \ar[r]^{\alpha_2} & F^d \ar[r]^{\pi_2} & C_2 \ar[r] & 0}
\end{eqnarray} 
where $\pi_1, \pi_2$ are the projections. Applying the left exact contravariant functor ${\rm Hom}( - , C_2)$ to the upper sequence on (\ref{diagg}) we find a map $\phi \in {\rm Hom}(C_1,C_2)$ and we define $L_\sigma(f) := \phi$.

Now given $C$ an object of $\mathcal{C}(\mathbb{P}^{n})$ we take $\alpha  = \sum_{i=1}^{w} A_i \otimes \sigma_i, \; {\rm with} \; A_i \in {\rm Hom}(\mathbb{C}^a, \mathbb{C}^b), i=1, \cdots, w.$ Then we have that $A = (\{\mathbb{C}^a, \mathbb{C}^b\}, \{A_i\}_{i=1}^w)$ is a globally injective representation of $Rep(K_{w})$ such that $F(A) = C$. Therefore $L_\sigma$ is essentially surjective.

Finally, we need to prove that $L_\sigma$ is fully faithful.  To check that it is full,  given
$\phi \in {\rm Hom}_{\mathcal{C}(\mathbb{P}^n)}(L_\sigma(A), L_\sigma(B))$ we want $f = (f_1,f_2) \in {\rm Hom}_{Rep(K_w)^{g.i}}(A,B)$ such that $F(f)= \phi$. Let $\tilde{\phi} = \phi \pi_1 \in {\rm Hom}(F^b, C_2)$. Let us apply the left exact covariant functor
${\rm Hom}(F^b, -)$ to the lower sequence on (\ref{diagg1}).

\begin{eqnarray}\label{diagg1}
\xymatrix{ 0 \ar[r] & E^a \ar@{-->}[d]_{f_{1}^{'}} \ar[r]^{\alpha_1} & F^b \ar@{-->}[d]_{f_{2}^{'}} \ar[r]^{\pi_1}&  C_1 \ar[r] \ar[d]^\phi & 0\\
0 \ar[r] & E^c \ar[r]^{\alpha_2} & F^d \ar[r]^{\pi_2} & C_2 \ar[r] & 0}
\end{eqnarray}
We have
$$\xymatrix{
0\ar[r] & {\rm Hom}(F^b, E^c) \ar[r]^{\rho_1}& {\rm Hom}(F^b,F^d) \ar[r]^{\rho_2} &  {\rm Hom}(F^b, C_2) \ar[r] & {\rm Ext}^{1}(F^b, E^c)}$$ 
since ${\rm Hom}(F^b, E^c) = {\rm Ext}^{1}(F^b, E^c) = 0$ then
\begin{eqnarray}\label{eqq}
\rho_2:  {\rm Hom}(F^b,F^d ) \rightarrow {\rm Hom}(F^b, C_2)
\end{eqnarray} 
is an isomorphism, so there is a morphism $f_{2}^{'}\in {\rm Hom}(F^b,F^d )$ such that
$$ \rho_2 (f^{'}_2) = \pi_2 f^{'}_2  = \phi \pi_1 $$
with $f^{'}_2 = f_2 \otimes 1_F$ and $f_2 \in {\rm Hom}(\mathbb{C}^b, \mathbb{C}^d)$.

Consider $\tilde{\tilde{\phi}} = f_{2}^{'} \alpha_1 \in {\rm Hom}(E^a, F^d)$. Applying the left exact covariant functor
${\rm Hom}(E^a, -)$ to the lower sequence on (\ref{diagg1}) we get
$$ \xymatrix{ 
0\ar[r] & {\rm Hom}(E^a, E^c) \ar[r]^{\gamma_1}& {\rm Hom}(E^a,F^d) \ar[r]^-{\gamma_2} &  {\rm Hom}(E^a, C_2) \ar[r] &\cdots
}$$

Once we have an exact sequence,
$$ \gamma_2(f^{'}_2\alpha_1) = \pi_2 f_2^{'}\alpha_1 = \phi \pi_1 \alpha_1 = 0 $$
then $f^{'}_2 \alpha_1 \in \ker{\gamma_2} = {\rm im}\, \gamma_1 $, and there is a map $f^{'}_1 \in {\rm Hom}(E^a, E^c)$ such that
$\gamma_1 (f_{1}^{'}) = \alpha_2 f^{'}_{1} = f^{'}_2 \alpha_1$ and $f^{'}_1 = f_1 \otimes 1_E$ with
$f_1 \in {\rm Hom}(\mathbb{C}^a, \mathbb{C}^c)$.

Since $\alpha_1 = \sum_{i=1}^{w} A_i \otimes \sigma_i$, $\alpha_2 = \sum_{i=1}^{w} B_i \otimes \sigma_i$,
$\alpha_2 f^{'}_1 = f^{'}_2 \alpha_1$, and $\sigma$ is a basis then $f_2 A_i = B_i f_1, i= 1, \cdots, w$, thus
$f = (f_1, f_2) \in {\rm Hom}_{Rep(K_w)^{g.i}}(A, B)$.

Now we need to prove that $L_\sigma(f) = \phi$. Suppose $L_\sigma(f) = \overline{\phi}$ such that
$\overline{\phi} \pi_1 = \pi_2 f^{'}_2 = \phi \pi_1$. Then $(\overline{\phi} - \phi) \pi_1 = 0$ and
$C_1 = {\rm im}\, \pi_1 \subset \ker(\overline{\phi} - \phi)$  therefore $\overline{\phi} = \phi$.

To prove that the functor is faithful, we must show that
$L_\sigma:{\rm Hom}_{Rep(K_w)^{g.i}}(A, B) \rightarrow {\rm Hom}_{\mathcal{C}(\mathbb{P}^n_{\mathbb{C}})}(L_\sigma(A), L_\sigma(B))$ is injective. Let $f = (f_1, f_2)$, $g=(g_1, g_2) \in {\rm Hom}(A, B)$ be morphisms such that $L_\sigma(f) = \phi_1 = \phi_2 = L_\sigma(g)$, that is, $\phi_1 - \phi_2 = 0$.

\begin{eqnarray}\label{diagg2}
\xymatrix{ 0 \ar[r] & E^a \ar[d]_{f_{1}^{'} - g^{'}_1} \ar[r]^{\alpha_1} & F^b \ar[d]_{f_{2}^{'} - g_{2}^{'}} \ar[r]^{\pi_1}&  C_1 \ar[r] \ar[d]^0 & 0\\
0 \ar[r] & E^c \ar[r]^{\alpha_2} & F^d \ar[r]^{\pi_2} & C_2 \ar[r] & 0}
\end{eqnarray}

Given $\phi_1 - \phi_2 = 0 \in {\rm Hom}(C_1, C_2)$, doing the same construction as before,
$$ 0 \pi_1 = 0 \in {\rm Hom}(F^b , C_2) \simeq {\rm Hom}(F^b,F^d )$$
with isomorphism given by $\rho_2$ in (\ref{eqq}). Since

$$\rho_2 (f^{'}_2 - g^{'}_2) = \pi_2 \circ (f^{'}_2 - g^{'}_2) = 0$$ then $f^{'}_2 - g^{'}_2 = 0$ and so $f^{'}_2 = g^{'}_2$. Similarly, $0 \alpha_1 = 0 \in {\rm Hom}(E^a, F^d)$ and

$$\gamma_1 (f^{'}_1 - g^{'}_1) = \alpha_2 (f^{'}_1 - g^{'}_1) = 0 \alpha_1 = 0.$$ Since $\gamma_1$ injective, $f^{'}_1 - g^{'}_1 = 0$, then $f^{'}_1 = g^{'}_1.$  Therefore $L_\sigma$ is faithful.

\end{proof}

\begin{remark} \rm
Note that the functor $L_\sigma$ depends on the choice of basis $\sigma$. Let $\sigma'$ be another basis for
${\rm Hom}(E,F)$. Let $L_{\sigma'}$ be the equivalence functor equivalence between the category of $(E,F,\sigma')$-globally injective representations of $K_w$ and the cokernel bundles on $\pn$. Then if $G$ is the inverse functor of $L_{\sigma'}$ we have that the functor $G\circ L_{\sigma'}$ gives an equivalence between the categories $(E,F,\sigma)$- and $(E,F,\sigma')-$globally injective representations of $K_w$.
\end{remark}

\begin{lemma}\label{decomp1}
For any choice of basis $\sigma$, the functor  $L_{\sigma}:Rep(K_{w})^{gi}\rightarrow\mathcal{C}(\mathbb{P}^n)$ defined above is additive and exact. In particular, if $R = R_1 \oplus R_2$ is a globally injective representation, then
$L_{\sigma}(R) \simeq L_{\sigma}(R_1) \oplus L_{\sigma}(R_2)$.
\end{lemma}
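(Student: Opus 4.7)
The plan is to verify additivity from the uniqueness built into the construction of $L_\sigma$ on morphisms, derive exactness via a nine-lemma argument, and then extract the direct-sum statement as a corollary.

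First I would prove additivity. From the proof of Theorem \ref{Teo1}, for $f=(f_1,f_2)\in {\rm Hom}_{Rep(K_w)^{gi}}(A,B)$ the morphism $L_\sigma(f):C_1\to C_2$ is characterized as the unique one satisfying $L_\sigma(f)\circ\pi_1=\pi_2\circ(f_2\otimes 1_F)$, since $C_1={\rm coker}\,\alpha_1$. Given a second morphism $g=(g_1,g_2)$, one has $(f+g)_2\otimes 1_F = (f_2\otimes 1_F)+(g_2\otimes 1_F)$; because $\pi_1,\pi_2$ are $\mathbb{C}$-linear, the sum $L_\sigma(f)+L_\sigma(g)$ also satisfies the defining condition for $L_\sigma(f+g)$, and uniqueness forces equality. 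The argument for scalar multiplication is identical, so $L_\sigma$ is additive.

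For exactness, let $0\to A'\to A\to A''\to 0$ be a short exact sequence in $Rep(K_w)^{gi}$ (meaningful since, by Lemma \ref{subob} and the subsequent lemma, this subcategory is closed under subobjects, extensions, and direct summands). At each vertex one obtains short exact sequences of $\mathbb{C}$-vector spaces; tensoring with the locally free sheaves $E$ and $F$, an exact operation, produces short exact sequences $0\to E^{a'}\to E^{a}\to E^{a''}\to 0$ and $0\to F^{b'}\to F^{b}\to F^{b''}\to 0$. These form the first two columns of a commutative $3\times 3$ diagram whose rows are the defining cokernel sequences of $L_\sigma(A'), L_\sigma(A), L_\sigma(A'')$, each of which is short exact by global injectivity of the corresponding representation. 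The nine lemma applied to this diagram forces the third column $0\to L_\sigma(A')\to L_\sigma(A)\to L_\sigma(A'')\to 0$ to be short exact, yielding the desired exactness of $L_\sigma$.

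The ``in particular'' statement now follows: a decomposition $R=R_1\oplus R_2$ yields a split short exact sequence $0\to R_1\to R\to R_2\to 0$ in $Rep(K_w)^{gi}$ whose section and retraction are themselves morphisms of globally injective representations; combining exactness with additivity shows that $L_\sigma$ sends this to a split short exact sequence of cokernel bundles, hence $L_\sigma(R)\simeq L_\sigma(R_1)\oplus L_\sigma(R_2)$. The main obstacle is really only cosmetic: one must carefully verify that all three rows of the $3\times 3$ diagram are short exact sequences of sheaves (equivalently, that $L_\sigma(A'), L_\sigma(A), L_\sigma(A'')$ are genuine vector bundles), which is exactly what global injectivity of $A', A, A''$ guarantees; once this is in place the nine lemma is purely formal.
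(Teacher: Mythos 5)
Your proposal is correct and follows essentially the same route as the paper: both build the same commutative $3\times 3$ diagram whose rows are the defining cokernel sequences and whose first two columns come from tensoring the vertex-wise exact sequences with $E$ and $F$, and both deduce the direct-sum statement from exactness applied to the split sequence $0\to R_1\to R_1\oplus R_2\to R_2\to 0$. The only differences are cosmetic: you invoke the nine lemma where the paper carries out the diagram chase (injectivity of $\varphi$, surjectivity of $\psi$) by hand, and you supply the additivity argument that the paper dismisses as a simple exercise.
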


\begin{proof}
Checking the additivity of $L_{\sigma}$ is a simple exercise. We show its exactness in detail.

Let us prove that $L_{\sigma}$ preserves exact sequences. Let $R_1 = (\{\mathbb{C}^{a_1}, \mathbb{C}^{b_1}\}, \{A_i\})$, $R_2 = (\{\mathbb{C}^{a_2}, \mathbb{C}^{b_2}\}, \{B_i\})$ and $R_3 = (\{\mathbb{C}^{a_3}, \mathbb{C}^{b_3}\},\{C_i\} )$ be globally injective representations of $K_{w}$ and let $f: R_1 \rightarrow R_2$ and $g: R_2 \rightarrow R_3$  be morphisms such that the sequence 
$$\xymatrix{0 \ar[r] &R_1 \ar[r]^{f} & R_2 \ar[r]^{g} & R_3 \ar[r] & 0}$$
is exact. We want to prove that
$$\xymatrix{0 \ar[r] & C_1 \ar[r]^{\varphi} & C_2\ar[r]^{\psi} & C_3 \ar[r] & 0}$$
is also exact, where $C_i = L_{\sigma}(R_i), i=1,2,3$ and $\varphi = L_{\sigma}(f), \psi= L_{\sigma}(g)$. From the exact sequence of representations we get
$$\xymatrix{ & 0 \ar[d] & 0 \ar[d]& 0 \ar@{-->}[d] & \\
0 \ar[r] & E^{\oplus a_1} \ar[r]^{\alpha_1} \ar[d]^{1_{E}\otimes f_1} & F^{\oplus b_1}\ar[r]^{\pi_1} \ar[d]^{1_{F}\otimes f_2} & C_1 \ar[d]^{\varphi} \ar[r] & 0\\
0 \ar[r] & E^{\oplus a_2}\ar[r]^{\alpha_2}\ar[d]^{1_{E}\otimes g_1} & F^{\oplus b_2} \ar[r]^{\pi_2} \ar[d]^{1_{F}\otimes g_2} & C_2 \ar[r] \ar[d]^{\psi} & 0\\
0 \ar[r] & E^{\oplus a_3} \ar[r]^{\alpha_3} \ar[d] & F^{\oplus b_3} \ar[r]^{\pi_3} \ar[d] & C_3 \ar[r] \ar@{-->}[d] & 0\\
& 0 & 0 & 0 & \\}$$
We need to show that $\varphi$ is injective and $\psi$ is surjective.

\begin{itemize}

\item $\psi$ is surjective:

It follows from the fact that $\pi_3 (1_{F} \otimes g_2)$ is surjective.


\item $\varphi$ is injective.

Let us suppose $\varphi(s) = 0, s \in C_1$. Then $s = \pi_1(v), v \in F^{b_1}$ and
$$ 0 = \varphi \pi_1 (v) = \pi_2 (1_{F}\otimes f_2)(v).$$
Since $\ker \pi_2 = \rm im \alpha_2$, there is $u \in E^{ a_2}$ such that
\begin{equation}\label{Eq1}
(1_{F }\otimes f_2)(v) = \alpha_2 (u)
\end{equation}

\end{itemize}

Note that
$$ \alpha_3 ( 1_{E}\otimes g_1) (u) = (1_{F}\otimes g_2)(\alpha_2)(u) = (1_{F}\otimes g_2)(1_{F}\otimes f_2)(v) = 0 $$
and since $\alpha_3$ is injective, $(1_{E}\otimes g_1)(u) = 0$ so $u = (1_{E}\otimes f_1)(u')$ with $u' \in E^{a_1}$.
We have
$$ \alpha_2 (u) = \alpha_2 (1_{E}\otimes f_1)(u') = (1_{F}\otimes f_2) \alpha_1 (u'). $$
From $(\ref{Eq1})$ we have $(1_{F}\otimes f_2)(v) =(1_{F}\otimes f_2)( \alpha_1 (u'))$. Since $(1_{F}\otimes f_2)$ is injective, it follows that $v = \alpha_1(u')$ therefore
$$ s = \pi_1(v) = \pi_1 \alpha_1 (u') = 0.$$

Now suppose $R = R_1 \oplus R_2$. Let us prove that $L_{\sigma}(R_1 \oplus R_2) = L_{\sigma}(R_1) \oplus L_{\sigma}(R_2)$. We have the short exact sequence

\begin{eqnarray*} \xymatrix{
0 \ar[r] & R_1 \ar[r]^<<<<<{i_{R_1}} & R_1 \oplus R_2 \ar[r]^>>>>{ \pi_{R_2}} & R_2 \ar@<.3cm>[l]^>>>>{i_{R_2}} \ar[r] & 0  }
\end{eqnarray*}
where $i_{R_j}$ is the inclusion and $\pi_{R_j}$ the projection, $j=1,2$. Since the sequence above is split, $\pi_{R_2} \circ i_{R_2} = 1_{R_2}$. Now since $L_{\sigma}$ is an exact functor, we have
\begin{eqnarray}\label{seqsplit} \xymatrix{
0 \ar[r] & L_{\sigma}(R_1) \ar[r]^<<<<{L_{\sigma}(i_{R_1})} & L_{\sigma}(R_1 \oplus R_2) \ar[r]^>>>>{ L_{\sigma}(\pi_{R_2})} & L_{\sigma}(R_2) \ar[r] \ar@<.3cm>[l]^>>>>{L_{\sigma}(i_{R_2})} & 0  }
\end{eqnarray}
Then
$$ L_{\sigma}(\pi_{R_2} \circ i_{R_2}) = L_{\sigma}(\pi_{R_2}) \circ L_{\sigma}(i_{R_2}) = L_{\sigma}(1_{R_2}) = 1_{L_{\sigma}(R_2)} $$ therefore the sequence (\ref{seqsplit}) is split. Hence $L_{\sigma}(R_1 \oplus R_2) \simeq L_{\sigma}(R_1) \oplus L_{\sigma}(R_2)$.
\end{proof}

We are finally ready to establish the main result of this section, a new proof for a result due to Brambilla, cf. \cite[Theorem 4.3]{Bb}.

\begin{theorem}\label{decon}
Let $C$ be a cokernel bundle of type $(E,F)$, given by the resolution
\begin{eqnarray}\label{cok1}
 \xymatrix{ 0  \ar[r] & E^{ a} \ar[r]^{\alpha} & F^{ b} \ar[r] & C \ar[r]& 0 } ,\end{eqnarray}
and let $w=\dim{\rm Hom}(E,F)$.
\begin{itemize}
\item[(i)] If $C$ is simple, then $a^2 + b^2 - wab \leq 1$
\item[(ii)] If $a^2 + b^2 - wab \leq 1$, then there exists a non-empty open subset
$U\subset{\rm Hom}(E^a,F^b)$ such that for every $\alpha\in U$ the corresponding cokernel bundle is simple.
\end{itemize}
\end{theorem}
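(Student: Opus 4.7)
The strategy is to transport both claims to the category $\mathrm{Rep}(K_w)^{gi}$ via the equivalence $L_\sigma$ of Theorem \ref{Teo1} and then invoke Lemma \ref{Kac} and Proposition \ref{schur}. Under this equivalence, the cokernel bundle $C$ in (\ref{cok1}) corresponds to a globally injective representation $A$ of $K_w$ with dimension vector $(a,b)$, and since $L_\sigma$ is fully faithful, $\mathrm{End}(C) \cong \mathrm{End}(A)$. Observe that the Tits form of $K_w$ at $(a,b)$ is precisely $q(a,b) = a^2 + b^2 - wab$.

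For (i), the simplicity of $C$ translates into $\mathrm{End}(A) = \mathbb{C}$, so in particular $A$ is indecomposable (any non-trivial decomposition $A = A_1 \oplus A_2$ would produce a non-scalar idempotent in $\mathrm{End}(A)$). If $q(a,b) > 1$, then Lemma \ref{Kac} forces every representation with dimension vector $(a,b)$ to decompose non-trivially, a contradiction; hence $q(a,b) \leq 1$.

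For (ii), assume $q(a,b) \leq 1$. Since $w \geq 3$ by condition (5), Proposition \ref{schur} gives that $(a,b)$ is a Schur root for $K_w$, so the locus $U_1 \subset \mathrm{Rep}(K_w, (a,b))$ of Schur representations is a non-empty Zariski-open subset. Global injectivity is the open (determinantal) condition that $\alpha(P) = \sum_i A_i \otimes \sigma_i(P)$ have full rank at every $P \in \pn$; this cuts out another open subset $U_2$. The hypothesis that $E^* \otimes F$ is globally generated together with the rank inequality $b\,\mathrm{rk}\,F - a\,\mathrm{rk}\,E \geq n$ ensures $U_2 \neq \emptyset$: the degeneracy locus where $\alpha(P)$ fails to be injective has expected codimension $b\,\mathrm{rk}\,F - a\,\mathrm{rk}\,E + 1 \geq n+1$ in $\pn$, so a Bertini-type argument applied to generic sections of $E^* \otimes F$ produces an $\alpha$ with empty degeneracy locus. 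Because $\mathrm{Rep}(K_w, (a,b))$ is an irreducible affine space, the two non-empty open subsets $U_1$ and $U_2$ must intersect; under the identification given by the basis $\sigma$, $U_1 \cap U_2$ maps to the desired open subset $U \subset \mathrm{Hom}(E^a, F^b)$ whose elements yield simple cokernel bundles. The main obstacle is exactly the verification that $U_2$ is non-empty: the quiver dictionary handles everything algebraic, but the Bertini/degeneracy-locus step is the only place where essential geometric input is required.
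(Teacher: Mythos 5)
Your proof follows the same route as the paper: transport the statement through the equivalence $L_\sigma$ of Theorem \ref{Teo1} and apply Lemma \ref{Kac} for part (i) and Proposition \ref{schur} for part (ii). If anything, you are more careful than the paper in part (ii), where you explicitly verify --- via the degeneracy-locus/Bertini argument that the paper itself only invokes later, in the proof of Theorem \ref{anylhd} --- that the open locus of globally injective representations is non-empty and hence meets the Schur locus, a point the paper's own proof leaves implicit.
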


\begin{proof} 

To prove $(i)$, let $C$ be a cokernel bundle given by resolution (\ref{cok1}) and suppose $C$ is simple. By Theorem \ref{Teo1} there is a globally injective representation $R$ of $K_w$ such that $C = F(R)$. Since $F$ is full, $\mathbb{C} = {\rm Hom}(C,C) \simeq {\rm Hom}(R,R)$, thus  $R$ is simple and therefore, by Proposition \ref{Kac}, $q(a,b) = a^2+ b^2 -wab \leq 1.$

To prove $(ii)$,  if $a^2 + b^2 - wab \leq 1$, there is a generic representation $R$ with dimension vector $(a,b)$ such that $R$ is Schur,  by Proposition \ref{Kac}. Then there is a non-empty open subset $U \subset {\rm Hom(E^a, F^b)}$ such that  $\alpha \in U$ corresponds to $C = F(R)$. Since  ${\rm Hom}(C,C) \simeq {\rm Hom}(R,R) = \mathbb{C}$, it follows that $C$ is simple.

\end{proof}

The previous Theorem implies that if $q(a,b) > 1$ then $C$ is not simple. However, more is true, and it is not difficult to establish the following stronger statement.

\begin{proposition}\label{decon1}
Under the same conditions as in Theorem (\ref{decon}), if $a^2+ b^2 - w ab>1$, then $C$ is decomposable.
\end{proposition}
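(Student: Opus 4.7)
The plan is to reduce this to a statement about representations of $K_w$ via the equivalence of Theorem \ref{Teo1}, and then apply the decomposition result in Lemma \ref{Kac}.

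First, I would fix a basis $\sigma$ of ${\rm Hom}(E,F)$ and use Theorem \ref{Teo1} to choose an $(E,F,\sigma)$-globally injective representation $R$ of $K_w$ with $L_\sigma(R) \simeq C$; its dimension vector is $(a,b)$. The Tits form of $K_w$ at $(a,b)$ is precisely $a^2+b^2-wab$, and by hypothesis this is $>1$, so Lemma \ref{Kac} forces $R$ to be decomposable: $R = R_1 \oplus R_2$ with both summands non-zero. The exactness lemma for $Rep(K_w)^{gi}$ stated just after Lemma \ref{subob} (closure under direct summands) then guarantees that each $R_i$ is itself globally injective, and Lemma \ref{decomp1} yields
$$ C \simeq L_\sigma(R) \simeq L_\sigma(R_1) \oplus L_\sigma(R_2).$$
The argument is complete provided neither summand on the right is the zero sheaf.

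I expect the only real obstacle to lie in this last verification. Writing $(a_i, b_i)$ for the dimension vector of $R_i$, the case $b_i = 0$ combined with global injectivity forces $a_i = 0$ and hence $R_i = 0$, contradicting non-triviality of the decomposition, so this case does not occur; when $a_i = 0$ and $b_i > 0$ one has $L_\sigma(R_i) \simeq F^{b_i} \neq 0$; and when $a_i, b_i > 0$, a vanishing $L_\sigma(R_i) = 0$ would make the associated injective sheaf map $E^{a_i} \to F^{b_i}$ an isomorphism of vector bundles, from which one extracts a non-zero element of ${\rm Hom}(F,E)$ by composing with an inclusion $F \hookrightarrow F^{b_i}$ and a projection $E^{a_i} \twoheadrightarrow E$, contradicting hypothesis $(2)$ on the pair $(E,F)$. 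All cases being ruled out, both summands are non-zero and $C$ is decomposable. The whole proof is thus essentially a direct transport of Lemma \ref{Kac} through the equivalence of categories; the one place where the specific hypotheses on $(E,F)$ actively intervene is in showing that no sub-representation can be mapped to the zero bundle under $L_\sigma$.
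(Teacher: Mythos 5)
Your proof is correct and follows essentially the same route as the paper: transport $C$ to a globally injective representation $R$ via Theorem \ref{Teo1}, apply Lemma \ref{Kac} to decompose $R$, and push the decomposition back through $L_\sigma$ using Lemma \ref{decomp1}. Your final verification that neither summand maps to the zero sheaf (using global injectivity and ${\rm Hom}(F,E)=0$) is a correct and worthwhile detail that the paper's two-line proof silently omits.
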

\begin{proof} 
Let $C$ be any cokernel bundle given by exact sequence (\ref{cok1}), such that $a^2 + b^2 - w ab  > 1$. Then there is a globally injective representation $R$ of $K_w$, such that $C = F(R)$ and $q(a,b)> 1$. By Lemma \ref{Kac}, $R$ is decomposable. Then by Lemma \ref{decomp1}, $C$ is decomposable.
\end{proof}

Under more restrictive conditions, Brambilla proved in \cite[Theorem 6.3]{Bb} that if $C$ is a \emph{generic} cokernel bundle such that $a^2+ b^2 - w ab \geq 1$ then $C \simeq C^n_k \oplus C^m_{k+1}$, where $C_k$ and $C_{k+1}$ are {\it Fibonacci bundles}, $n,m \in \mathbb{N}$ (we refer to \cite{Bb} for the definition of Fibonacci bundles).

Recall that A vector bundle $E$ on $\mathbb{P}^n$ is exceptional if it is simple and ${\rm Ext}^p(E,E) = 0$ for $p \geq 1$.

\begin{remark}\label{rmk}\rm
Since the path algebra associated to a Kronecker quiver $Q$ is hereditary, then ${\rm Ext}^p(R,R) = 0$ for every representation $R$ of
$Q$ and $ p \geq 2$, see \cite{Gab} for references. Since the functor $L_{\sigma}$ is exact, we have
${\rm Ext}^1(R,R) \simeq {\rm Ext}^1(L_{\sigma}(R), L_{\sigma}(R))$. Now we know from \cite{S} that
$$ q(a,b) = \dim {\rm Hom}(R,R) - \dim {\rm Ext}^1(R,R) $$
hence if $C$ is exceptional then ${\rm Ext}^1(R,R) = 0$ and $q(a,b)=1$.
\end{remark}

However the converse of the previous observation is not true. For instance, consider the generic cokernel bundle given by exact sequence
$$ \xymatrix{ 0 \ar[r] & \mathcal{O}_{\mathbb{P}^3} \ar[r] & \mathcal{O}_{\mathbb{P}^3}(4)^{35} \ar[r] & C \ar[r] & 0 }. $$
We have $q(1,35)=1 + {35}^2 - 35.1.35 = 1$, but from the long exact sequence of cohomologies, ${\rm Ext}^2(C,C) \simeq \mathbb{C}^{35}$ hence $C$ is not exceptional.

We conclude this section with a Lemma that will be useful later on.

\begin{lemma}\label{simpquiv}
Let $C_1$ and $C_2$ be cokernel bundles given by exact sequences
$$ \xymatrix{
0 \ar[r] & E_1 \ar[r]^{\alpha_1} & \opn^{b_1} \ar[r] & C_1 \ar[r] & 0}$$ and $$\xymatrix{0 \ar[r] & \opn^{b_2} \ar[r]^{\alpha_2} &
E_2 \ar[r] & C_2 \ar[r] & 0
}.$$ 

\begin{itemize}
\item[$(a)$] If $h^0(E_1^*) \geq b_1$ then there exists a non-empty open subset  $U_1 \subset {\rm Hom(E_1, \opn^{b_1})}$ such that for every $\alpha_1 \in U_1$ the corresponding bundle $C_1$ is simple;
\item[$(b)$]  If $h^0(E_2) \geq b_2$ then there exists a non-empty open subset  $U_2 \subset {\rm Hom( \opn^{b_2}, E_2)}$ such that for every $\alpha_2 \in U_2$ the corresponding bundle $C_2$ is simple.
\end{itemize}

\end{lemma}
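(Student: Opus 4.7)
The plan is to apply Theorem \ref{decon}(ii) directly to each of the two cases, with the Tits form of the relevant Kronecker quiver computed from the hypotheses on global sections. The key observation is that the lemma is essentially the specialization of Brambilla's theorem to the rank one situations $a=1$ or $b=1$, where the condition $a^{2}+b^{2}-wab \le 1$ reduces to a single inequality on $w$ that is exactly what the hypothesis provides.

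For part (a), I would view $C_1$ as a cokernel bundle of type $(E_1,\opn)$, so the corresponding Kronecker quiver $K_w$ has
$$w = \dim {\rm Hom}(E_1,\opn) = \dim H^0(E_1^*) = h^0(E_1^*),$$
and the dimension vector is $(a,b)=(1,b_1)$. The hypothesis $h^0(E_1^*)\ge b_1$ then gives
$$q(1,b_1) = 1 + b_1^{2} - w\,b_1 \le 1 + b_1^{2} - b_1^{2} = 1.$$
By Theorem \ref{decon}(ii), there is a non-empty open subset $U_1\subset {\rm Hom}(E_1,\opn^{b_1})$ such that every $\alpha_1\in U_1$ produces a simple cokernel bundle $C_1$, which is exactly the conclusion.

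For part (b), I would view $C_2$ as a cokernel bundle of type $(\opn,E_2)$, so now
$$w = \dim {\rm Hom}(\opn,E_2) = h^0(E_2),$$
with dimension vector $(a,b)=(b_2,1)$. The hypothesis $h^0(E_2)\ge b_2$ gives
$$q(b_2,1) = b_2^{2} + 1 - w\,b_2 \le b_2^{2} + 1 - b_2^{2} = 1,$$
and Theorem \ref{decon}(ii) again yields the desired non-empty open set $U_2\subset {\rm Hom}(\opn^{b_2},E_2)$.

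In both parts the verification of $q\le 1$ is straightforward once the identifications $w=h^0(E_1^*)$ and $w=h^0(E_2)$ are made; these identifications are the only substantive step, and they follow from ${\rm Hom}(E_1,\opn)=H^0(E_1^*)$ and ${\rm Hom}(\opn,E_2)=H^0(E_2)$. I do not expect any significant obstacle: the conditions (1)--(5) required for the $(E,F)$ pair to define a category of cokernel bundles are already implicit in calling $C_1,C_2$ cokernel bundles, and Theorem \ref{decon} does the rest of the work. The mild point worth flagging is the boundary case where $b_i$ is small; here condition (5) ($w\ge 3$) in the definition of cokernel bundles guarantees we are in the regime covered by Theorem \ref{decon}, so no extra argument is needed.
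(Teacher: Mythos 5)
Your proposal is correct and follows exactly the paper's argument: identify $w$ with $h^0(E_1^*)$ (resp.\ $h^0(E_2)$), observe that the hypothesis forces $q(1,b_1)=1+b_1^2-wb_1\le 1$ (resp.\ $q(b_2,1)\le 1$), and invoke Theorem \ref{decon}(ii). The paper's proof is a one-line version of the same computation, so there is nothing to add.
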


\begin{proof}
If $h^0(E_1^*) \geq b_1$, we have $1 + {b_1}^2 - h^0(E_1^*) b_1 \leq 1$, therefore by Theorem \ref{decon} $(ii)$ it follows that $C_1$ is simple. The proof of item (b) is similar.
\end{proof}

\subsection{Steiner bundles}

As an important example of cokernel bundles we have the \emph{Steiner bundles} introduced by Mir\'o-Roig and Soares in \cite{SM}. These bundles generalize the definition of Steiner bundles in the sense of Dolgachev and Kapranov, cf. \cite{DK}. The definition is the following.

Let $X$ be a smooth irreducible algebraic variety $X$ over an algebraically closed field of characteristic zero. Let
$\mathcal{D} = D^b(\mathcal{O}_X - mod)$ be the bounded derived category of the abelian category of coherent sheaves of
$\mathcal{O}_X -$modules on $X$. A vector bundle $E$ on $X$ is called a \emph{Steiner bundle of type} $(E,F)$ if it is defined by an exact sequence of the form
\begin{eqnarray}\label{steiner}
\xymatrix{ 0 \ar[r] & E^a \ar[r]^{\alpha} & F^b \ar[r] & E \ar[r] & 0 } 
\end{eqnarray}
where $a,b \geq 1$ and $(E,F)$ is an ordered pair of vector bundles on $X$ satisfying the following two conditions:
\begin{itemize}
\item[$(i)$] $(E,F)$ is {\it strongly exceptional}; that is, $E, F$ are exceptional,
 $${\rm Ext}^p (F, E) = 0, \;  \forall \; p \in \mathbb{Z} \;  \mbox{and} \;  {\rm Ext}^p(E,F) = 0, \; p \neq 0.$$
\item[$(ii)$] $E^{*} \otimes F$ is globally generated.
\end{itemize}

Note that the Steiner bundles of type $(E, F)$ where $\dim {\rm Hom}(E,F) \geq 3$, are a particular case of cokernel bundles.

\vspace{.3cm}

\begin{proposition}
Let $C_1, C_2$ be Steiner bundles of type $(E, F)$. Then ${\rm Ext}^p(C_1, C_2) = 0$ for $p \geq 2$.
\end{proposition}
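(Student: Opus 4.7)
The plan is a standard two-step long exact sequence argument using the strong exceptionality of the pair $(E,F)$. Write the two defining sequences
$$ 0 \to E^{a_1} \to F^{b_1} \to C_1 \to 0 \quad\text{and}\quad 0 \to E^{a_2} \to F^{b_2} \to C_2 \to 0. $$
First I would apply the contravariant functor ${\rm Hom}(-,C_2)$ to the sequence defining $C_1$, producing a long exact sequence
$$ \cdots \to {\rm Ext}^{p-1}(E^{a_1},C_2) \to {\rm Ext}^p(C_1,C_2) \to {\rm Ext}^p(F^{b_1},C_2) \to \cdots $$
So it suffices to prove, for every $p \geq 1$, the vanishings ${\rm Ext}^p(E,C_2)=0$ and ${\rm Ext}^p(F,C_2)=0$.

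To establish these, I would apply the covariant functors ${\rm Hom}(E,-)$ and ${\rm Hom}(F,-)$ to the sequence defining $C_2$. For ${\rm Hom}(F,-)$ this gives
$$ \cdots \to {\rm Ext}^p(F,F^{b_2}) \to {\rm Ext}^p(F,C_2) \to {\rm Ext}^{p+1}(F,E^{a_2}) \to \cdots $$
Since $F$ is exceptional, ${\rm Ext}^p(F,F)=0$ for $p\neq 0$; since $(E,F)$ is strongly exceptional, ${\rm Ext}^q(F,E)=0$ for all $q$. Both flanking terms vanish, so ${\rm Ext}^p(F,C_2)=0$ for $p\geq 1$. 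Similarly, applying ${\rm Hom}(E,-)$ gives
$$ \cdots \to {\rm Ext}^p(E,F^{b_2}) \to {\rm Ext}^p(E,C_2) \to {\rm Ext}^{p+1}(E,E^{a_2}) \to \cdots $$
Here ${\rm Ext}^p(E,F)=0$ for $p\neq 0$ by strong exceptionality, and ${\rm Ext}^{p+1}(E,E)=0$ for $p\geq 1$ since $E$ is exceptional; hence ${\rm Ext}^p(E,C_2)=0$ for $p\geq 1$.

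Feeding these back into the first long exact sequence: for $p \geq 2$ both ${\rm Ext}^{p-1}(E^{a_1},C_2)$ and ${\rm Ext}^p(F^{b_1},C_2)$ vanish, forcing ${\rm Ext}^p(C_1,C_2)=0$. There is no real obstacle here — the argument is purely a diagram chase through two long exact sequences — the only point requiring a touch of care is keeping track of index shifts so that the ``strongly exceptional'' hypothesis on the pair $(E,F)$ (which kills ${\rm Ext}^q(F,E)$ in \emph{every} degree) is invoked in the right place, and the exceptionality of $E$ and $F$ individually is used to kill the remaining self-Ext groups in positive degree.
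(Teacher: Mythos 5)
Your proof is correct and takes essentially the same route as the paper's: a two-step long exact sequence d\'evissage using exceptionality of $E$, $F$ and strong exceptionality of the pair. The only difference is the order --- the paper first resolves $C_1$ in the first argument (obtaining ${\rm Ext}^p(C_1,F)=0$ for $p\geq 2$ and ${\rm Ext}^q(C_1,E)=0$ for all $q$) and then applies ${\rm Hom}(C_1,-)$ to the sequence defining $C_2$, while you do the mirror image, resolving $C_2$ in the second argument first; the vanishings invoked are identical.
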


\begin{proof}
Suppose $C_1$ and $C_2$ are Steiner bundles given by short exact sequences
\begin{eqnarray}\label{eqsteiner1}
\xymatrix{0 \ar[r] & E^{a_1} \ar[r] & F^{b_1} \ar[r] & C_1 \ar[r] & 0}
\end{eqnarray}
and
\begin{eqnarray}\label{eqsteiner2}
\xymatrix{0 \ar[r] & E^{a_2} \ar[r] & F^{b_2} \ar[r] & C_2 \ar[r] & 0}
\end{eqnarray}

Applying the functor ${\rm Hom}( - ,F)$ to the sequence ($\ref{eqsteiner1}$) we have ${\rm Ext}^p(C_1, F) = 0$, $p \geq 2$. Applying ${\rm Hom}( - , E)$ to the same sequence, we obtain ${\rm Ext}^q(C_1, E) = 0, q \geq 0$. Filnally applying the functor ${\rm Hom}(C_1, - )$
to the sequence ($\ref{eqsteiner2}$) we conclude that ${\rm Ext}^j(C_1, C_2) = 0$ for $j \geq 2.$
\end{proof}

As a corollary we have the following. Recall that $w = \dim~{\rm Hom}(E, F).$

\begin{corollary}
Let $C$ be a Steiner bundle of type $(E, F)$ given by short exact sequence.
$$ \xymatrix{0 \ar[r] & E^a \ar[r]^{\alpha} & F^{b} \ar[r] & C \ar[r] & 0} $$
Then
\begin{itemize}
\item[$(i)$] If $C$ is exceptional then $q(a,b) = a^2 + b^2 - w a b =1.$
\item[$(ii)$] If $q(a,b) = 1 $ then there is a non-empty open subset $U \subset {\rm Hom}(E^a, F^b)$ such that for every $\alpha \in U$ the corresponding bundle $C$ is exceptional.
\end{itemize}
\end{corollary}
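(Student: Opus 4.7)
The plan is to read this Corollary as a straightforward assembly of machinery already developed in the section. The key inputs are: Theorem \ref{Teo1}, giving an equivalence $L_\sigma$ between $(E,F,\sigma)$-globally injective representations of $K_w$ and cokernel bundles of type $(E,F)$ (and Steiner bundles of type $(E,F)$ with $w \geq 3$ fit into this framework); the observation in Remark \ref{rmk} that, because the path algebra of $K_w$ is hereditary and $L_\sigma$ is exact, $\operatorname{Ext}^1(R,R) \simeq \operatorname{Ext}^1(L_\sigma(R), L_\sigma(R))$; the standard identity $q(a,b) = \dim\operatorname{Hom}(R,R) - \dim\operatorname{Ext}^1(R,R)$; and the preceding Proposition, which supplies the vanishing $\operatorname{Ext}^p(C,C) = 0$ for $p \geq 2$ for \emph{every} Steiner bundle $C$ of type $(E,F)$.

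For part (i), suppose $C$ is exceptional and let $R$ be the representation of $K_w$ with dimension vector $(a,b)$ corresponding to $C$ under $L_\sigma$. Since $C$ is simple, the equivalence of Theorem \ref{Teo1} gives $\operatorname{Hom}(R,R) \simeq \operatorname{Hom}(C,C) = \mathbb{C}$. Since $C$ is exceptional, $\operatorname{Ext}^1(C,C) = 0$, and by Remark \ref{rmk} this transports back to $\operatorname{Ext}^1(R,R) = 0$. Plugging into the Euler formula for the Tits form yields $q(a,b) = 1 - 0 = 1$, as desired.

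For part (ii), suppose $q(a,b) = 1$. Theorem \ref{decon}(ii) produces a non-empty open subset $U \subset \operatorname{Hom}(E^a, F^b)$ such that for every $\alpha \in U$ the corresponding cokernel bundle $C$ is simple. Since $C$ is in particular a Steiner bundle of type $(E,F)$, the preceding Proposition already gives $\operatorname{Ext}^p(C,C) = 0$ for all $p \geq 2$. It therefore only remains to verify $\operatorname{Ext}^1(C,C) = 0$. For this, let $R$ be the representation corresponding to $C$: simplicity of $C$ forces $\dim \operatorname{Hom}(R,R) = 1$, and the identity $q(a,b) = \dim\operatorname{Hom}(R,R) - \dim\operatorname{Ext}^1(R,R) = 1$ then forces $\operatorname{Ext}^1(R,R) = 0$. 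Applying Remark \ref{rmk} again gives $\operatorname{Ext}^1(C,C) = 0$, so $C$ is exceptional on the same open set $U$.

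The substantive content was already absorbed into Theorem \ref{decon}, Remark \ref{rmk}, and the preceding Proposition; no new obstacle arises. The one point that might look like it needs a separate argument is whether the open set supplied by Theorem \ref{decon}(ii) also guarantees exceptionality rather than just simplicity, but that is automatic here precisely because the higher Ext vanishing holds uniformly for all Steiner bundles of type $(E,F)$, not merely for generic ones.
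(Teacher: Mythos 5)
Your proposal is correct and follows essentially the same route as the paper: part (i) is exactly the content of Remark \ref{rmk}, and part (ii) combines Theorem \ref{decon}(ii) for simplicity on an open set, the identity $q(a,b)=\dim\operatorname{Hom}(R,R)-\dim\operatorname{Ext}^1(R,R)$ transported through $L_\sigma$ to kill $\operatorname{Ext}^1(C,C)$, and the preceding Proposition for the higher Ext vanishing. The only difference is that you spell out the bookkeeping inside Remark \ref{rmk} more explicitly than the paper does, which is harmless.
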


\begin{proof} 
\begin{itemize}
\item[$(i)$] This follows from Remark \ref{rmk}.
\item[$(ii)$] Suppose $q(a,b) = 1$. By Theorem \ref{decon} item $(ii)$ there exists a non-empty open subset
$U \subset {\rm Hom}(E^a, F^b)$ such that for every $\alpha  \in U$ the associated bundle $C$ is simple. From Remark \ref{rmk} we see that ${\rm Ex}t^1(C,C) = 0$. Finaly, using the previous Proposition, we have ${\rm Ext}^{p}(C,C) = 0$ for $p \geq 2$. Hence $C$ is exceptional.
\end{itemize}
\end{proof}

\begin{remark}\rm
Soares also proved in \cite[Theorem 2.2.7]{Soa}, using a different method, that a generic Steiner bundle of type $(E,F)$ given by short exact sequence $(\ref{steiner})$ is exceptional if and only if $q(a,b) = 1$.
\end{remark}


\section{Simple vector bundles on $\pn$ with arbitrary homological dimension}\label{simple}

In this section we construct simple vector bundles of rank $n$ on $\pn$ with arbitrary homological dimension and with pure resolution. More precisely, our goal is to prove the following result.

\begin{theorem}{\label{anylhd}}
Given integers $n \geq 4$ and $1 \leq l \leq n-1$, there exists a simple vector bundle $E$ over $\pn$ with $\rm{rk}(E) = n$ of rank $n$ given by a pure resolution of the form
\begin{equation}{\label{longseq}}
\xymatrix{ 0 \ar[r]& \opn(-d_0) \ar[r]^{\alpha_0} & \opn^{n+1} \ar[r]^>>>>{\alpha_1} & \opn(d_2)^{2n} \ar[r]^{\alpha_2} & \ldots \\
\ldots \ar[r]^<<<{\alpha_{{l-1}}} & \opn(d_l)^{2n} \ar[r]^>>>>{\alpha_l}& E  \ar[r] & 0 }
\end{equation}
where the integers $d_0, d_2, \cdots, d_l$ satisfy $d_0 > 0$ and $0 < d_2 < \cdots < d_l$.
\end{theorem}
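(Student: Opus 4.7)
The plan is to prove the theorem by induction on the homological dimension $l$, building $E=E_l$ at each stage as the cokernel in a short exact sequence of the form
$$0 \to E_{l-1} \to \opn(d_l)^{2n} \to E_l \to 0,$$
with simplicity of $E_l$ established by Lemma~\ref{simpquiv}.

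For the base case $l=1$, choose $d_0=1$ and let $\alpha_0:\opn(-1)\to\opn^{n+1}$ be given by $n+1$ generic linear forms; the cokernel $E_1$ is a rank-$n$ vector bundle (concretely $E_1\simeq T\pn(-1)$). The pair $(\opn(-1),\opn)$ trivially satisfies conditions (1)--(5) of the cokernel bundle setup of Section~2.2, and since $h^0(\opn(1))=n+1=b_1$, Lemma~\ref{simpquiv}(a) guarantees that $E_1$ is simple for generic $\alpha_0$.

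For the inductive step, assume $E_{l-1}$ is a simple rank-$n$ bundle with the desired pure resolution of homological dimension $l-1$, ending in $\opn(d_{l-1})^{2n}\twoheadrightarrow E_{l-1}$. Choose an integer $d_l>d_{l-1}$ large enough that $H^0(E_{l-1}(-d_l))=0$, $H^1(E_{l-1}(-d_l))=0$, $E_{l-1}^*(d_l)$ is globally generated, and $h^0(E_{l-1}^*(d_l))\geq 2n$. Under these conditions the pair $(E_{l-1}(-d_l),\opn)$ meets the cokernel bundle hypotheses, and Lemma~\ref{simpquiv}(a) applied with $b_1=2n$ yields, for a generic $\alpha\in{\rm Hom}(E_{l-1}(-d_l),\opn^{2n})$, a simple cokernel $C$ that is locally free of rank $2n-n=n$. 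Setting $E_l:=C(d_l)$ gives a simple rank-$n$ bundle fitting in the displayed sequence. Splicing this short exact sequence onto the pure resolution of $E_{l-1}$ produces a pure resolution of $E_l$ with degrees $-d_0<0<d_2<\cdots<d_{l-1}<d_l$ strictly increasing; because consecutive degrees differ, all entries of the connecting matrices lie in strictly positive degree, so the resolution is automatically minimal and hence of the required type with ${\rm hd}(E_l)=l$.

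The main technical obstacle lies in the simultaneous verification of the four conditions on $d_l$. The vanishing $H^0(E_{l-1}(-d_l))=0$ holds for $d_l\gg 0$ because a coherent sheaf on $\pn$ has sections only in finitely many non-positive twists, while global generation of $E_{l-1}^*(d_l)$ and the growth $h^0(E_{l-1}^*(d_l))\geq 2n$ (which is exactly what is needed to force the Tits form $1+4n^2-2nw\leq 1$) are standard consequences of Serre's theorem. The delicate point is $H^1(E_{l-1}(-d_l))=0$, which is \emph{not} a direct instance of Serre vanishing since here the twist tends to $-\infty$. This is handled by Serre duality,
$$H^1(E_{l-1}(-d_l)) \simeq H^{n-1}(E_{l-1}^*(d_l-n-1))^*,$$
followed by Serre vanishing applied to the coherent sheaf $E_{l-1}^*$, where one uses $n-1\geq 1$ (valid since $n\geq 4$).
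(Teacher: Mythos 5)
Your plan follows the paper's proof essentially verbatim: induction on $l$, with $E_l$ obtained as the cokernel of a generic map from $E_{l-1}$ to $\opn(d_l)^{2n}$ for $d_l\gg 0$, the five cokernel-bundle conditions checked, and simplicity deduced from Theorem \ref{decon} via Lemma \ref{simpquiv}. The only differences are minor: the paper gets the vanishings $H^0(E_{l-1}(-d_l))=H^1(E_{l-1}(-d_l))=0$ for \emph{all} twists from the Bohnhorst--Spindler criterion (Proposition \ref{bsprop}) rather than from Serre duality plus Serre vanishing, it makes explicit the degeneracy-locus codimension count ($2n-n+1=n+1>n$) showing the generic cokernel is actually locally free (a point you assert without argument), and it establishes ${\rm hd}(E_l)=l$ by exhibiting a nonvanishing $H^{n-l}(E_l(l_0))\simeq H^{n-l+1}(E_{l-1}(l_0))$ rather than by appealing to minimality of the spliced resolution.
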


\begin{proof}
We proceed by induction on $l$. First, for $l=1$, consider the vector bundle $E_1$ given by exact sequence
$$ \xymatrix{0 \ar[r] & \opn(-d_0) \ar[r]^{\alpha_0} & \opn^{n+1} \ar[r] & E_1 \ar[r] & 0 }$$ where $\alpha_0$ is a generic map given by $n+1$ forms of degree $d_0$. Clearly $E_1$ has $\rm{hd}(E_1) = 1$, and it is stable by \cite[Theorem 2.7]{BS}.

Now let $E_{t-1}$ be a simple vector bundle of rank $n$ and homological dimension $t-1$ admitting a resolution of the form
\begin{eqnarray}\label{seqex}
0\to \opn(- d_0) \stackrel{\alpha_0}{\longrightarrow} \opn^{n+1} \stackrel{\alpha_1}{\longrightarrow}
\cdots \stackrel{\alpha_{t-2}}{\longrightarrow} \opn(d_{t-1})^{2n} \stackrel{\alpha_{t-2}}{\longrightarrow}
E_{t-1} \to 0
\end{eqnarray}
Choose $d_t > \max\{d_{t-1}, D_t\}$ where $E^{*}_{t-1}$ is $D_t-$regular and such that $h^{0}(E^{*}(d_t)) \geq 2n$. The degeneracy locus of a generic map $\alpha : E_{t-1} \rightarrow \opn(d_t)^{2n}$, i.e. is the variety
$$\Delta_{\alpha} = \{ P \in \pn  \mid \alpha_P: {E_{t-1}}_P \rightarrow \opn(d_t)^{ 2n}_P \; \mbox{is not injective } \}.$$ 
has codimension $2n-n+1=n+1$, see \cite[Chapter II]{ACGH}; hence $\Delta_{\alpha}$ is empty, and $E_t =$coker$\alpha$ is a vector bundle. We argue that it is in fact a cokernel bundle:

\begin{itemize}
\item[$(1)$] $E_{t-1}$ and $\opn(d_t)$ are simple;
\item[$(2)$] We must check that \rm{Hom}$(\opn(d_t), E_{t-1}) \simeq H^{0}(E_{t-1}(-d_t)) = 0$. Breaking (\ref{seqex}) into exact sequences, twisting by $\opn(-d_t)$, and passing to the long exact sequence of cohomologies we have
$$ 0 \to H^{0}(E_{t-2}(-d_t)) \to H^{0}(\opn(d_{t-1}-d_t)^{2n}) \to H^{0}(E_{t-1}(-d_t)) \to H^{1}(E_{t-2}(-d_t)) \to 0. $$
Since $H^{0}(\opn(d_{t-1}-d_t)=0$, it follows that $H^{0}(E_{t-1}(-d_t)) \simeq H^{1}(E_{t-2}(-d_t))$. By Proposition \ref{bsprop}, since hd$(E_{t-2}) =t-2 \leq  n-2$ we have $H^{1}(E_{t-2}(-d_t)) = 0.$
\item[$(3)$] Next, we must check that $\rm{Ext}^{1}(\opn(d_t),E_{t-1}) \simeq {\rm{ H}}^1(\pn, E_{t-1}(-d_t))=0$. It again follows from Proposition \ref{bsprop}, since hd$(E_{t-1}) = t-1 \leq n-2$.
\item[$(4)$] $E_{t-1}^{*}\otimes \opn(d_t)$ is globally generated, by our choice of $d_t$.
\item[$(5)$] $\dim{\rm{ Hom}}(E_{t-1}, \opn(d_t)) = h^{0}(E_{t-1}^{*}(d_t)) \geq 2n$, also by our choice of $d_t$.
\end{itemize} 

Therefore by definition, $E_t$ is a cokernel bundle, as desired; since
$$ q(1,2n) = 1 + 4n^2 - h^{0}(E^{*}(d_t)) 2n \leq 1 , $$
by Theorem \ref{decon}, we conclude that $E_t$ is simple.

It also follows that $E_t$ has a pure resolution of the form
\begin{eqnarray}\label{resEt}
\xymatrix{0 \ar[r] & \opn(-d_0) \ar[r]^{\alpha_0} & \opn^{n+1} \ar[r]^{\alpha_1} & \opn^{2n}(d_2) \ar[r]^{\alpha_2} & \cdots \\
 \ar[r]^<<<<{\alpha_{t-2}} & \opn(d_{t-1})^{2n}\ar[r]^{\alpha_{t-1}} & \opn(d_t)^{2n} \ar[r] & E_t \ar[r] & 0  }.
 \end{eqnarray}
We have to check that  hd$(E_t)\leq t$. Indeed, first note that, by  resolution (\ref{resEt}), hd$(E_t)\leq t$. Suppose hd$(E_t) \leq t-1$. Using Proposition \ref{bsprop} we have
$$\rm{H}^{q}(E_t(l))=0, \; 1 \leq q \leq n - (t-1) -1 , \; \forall l \in \mathbb{Z}. $$
Consider the short exact sequences of (\ref{resEt}). We have the sequence
$$ \xymatrix{0 \ar[r] & E_{t-1} \ar[r] &  \opn(d_t)^{2n} \ar[r] & E_t \ar[r] & 0}. $$
Since $2 \leq t \leq n-1$, applying the long exact sequence of cohomologies we have
$$ \xymatrix{0 \ar[r] & H^{n-t}(E_t(l)) \ar[r] & H^{n-t+1}(E_{t-1}(l)) \ar[r] & 0}\; \forall l \in \mathbb{Z} $$
thus $H^{n-t}(E_t(l)) \simeq H^{n-t+1}(E_{t-1}(l))$. Since hd$(E_{t-1}) = t-1$, it follows Proposition \ref{bsprop},
$$ \exists \; l_0 \in \mathbb{Z} ;\; H^{n-t+1}(E_{t-1}(l_0)) \neq 0,$$
hence $H^{n-t}(E_{t}(l_0))\neq 0 $, which contradicts hd$(E_t) \leq t-1$. Therefore, we must have hd$(E_t) = t$.

Using Proposition \ref{bsprop} once again, we can check that hd$(E_t) = t$, thus completing the proof.
\end{proof}


\section{Simplicity of syzygies of the Koszul complex}\label{syzkoszul}

From now on let $R = k[x_0, \ldots,x_n]$ be the ring of polynomials in $n+1$ variables with coefficients on an algebraically closed field $k$ of characteristic zero. Let $\{f_1, \ldots, f_{n+1}\}$ be a generic regular sequence of forms of degree $d$, and let
$I = (f_1, \ldots, f_{n+1})$ be the ideal generated by these forms and $\pn = {\rm Proj}(R)$. The \emph{Koszul complex}
$K(f_1, \cdots,f_{n+1})$ is given by
\begin{equation}\label{Kf}
\xymatrix{0 \ar[r] & \bigwedge^{n+1}T \ar[r] & \bigwedge^{n}T \ar[r] & \cdots \ar[r] & \bigwedge^{1} T \ar[r] & R \ar[r] & R/I \ar[r] & 0 }
\end{equation}
where $T = R(-d)^{n+1}$. 

Sheafifying the Koszul complex (\ref{Kf}) we get the follwoing exact sequence of vector bundles:
\begin{eqnarray}\label{Kcomp}
\xymatrix{0 \ar[r] & \opn(-(n+1)d) \ar[r]^{\alpha^{t}} & \opn(-nd)^{n+1} \ar[r] & \ldots \ar[r]  & \opn(-d)^{n+1} \ar[r] & \opn \ar[r] & 0}
\end{eqnarray} where $\alpha : \opn(-d)^{n+1} \rightarrow \opn$ is the map given by the forms $\{f_1, \ldots, f_{n+1}\}$. Breaking the above complex (\ref{Kcomp}) into short exact sequences, we obtain
$$\xymatrix{ 0 \ar[r] & \opn(-(n+1)d) \ar[r] & \opn(-nd)^{\binom{n+1}{n}} \ar[r] & F_1 \ar[r] & 0 }$$
$$\xymatrix{0 \ar[r] & F_1 \ar[r] & \opn(-(n-1)d)^{\binom{n+1}{n-1}} \ar[r] & F_2 \ar[r] & 0 }$$
\begin{equation}\label{syzygies} \vdots \end{equation}
$$\xymatrix{0 \ar[r] & F_{n-2} \ar[r] & \opn(-2d)^{\binom{n+1}{2}} \ar[r] & F_{n-1} \ar[r] & 0 }$$
$$\xymatrix{0 \ar[r] & F_{n-1} \ar[r] &  \opn(-d)^{n+1}  \ar[r] & \opn \ar[r] & 0}$$
It is not difficult to see that the syzygy sheaves $F_i$, $1 \leq i \leq n-1$, are actually locally free. Morever, it follows from \cite[Proposition 1.1.27]{MR}, that the above complex (\ref{Kcomp}) is \emph{self dual up to twist}, i.e. the complex and its dual are isomorphic up to twist.

The main goal of this section is to prove the following Theorem, again as an application of Theorem \ref{decon}.

\begin{theorem}\label{Kos}
The syzygies $F_i$, $1 \leq i \leq n-1$, of the Koszul complex (\ref{Kcomp}) are simple vector bundles of rank ${\rm rk}(F_i) = \binom{n}{i}$ and ${\rm hd}(F_i) = i$
\end{theorem}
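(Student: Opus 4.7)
I would prove the theorem by induction on $i$, applying Lemma \ref{simpquiv}(a) to each of the short exact sequences of (\ref{syzygies}) after a suitable twist.

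For the base case $i=1$, twisting $0 \to \opn(-(n+1)d) \to \opn(-nd)^{n+1} \to F_1 \to 0$ by $\opn(nd)$ yields $0 \to \opn(-d) \to \opn^{n+1} \to F_1(nd) \to 0$, exhibiting $F_1(nd)$ as a cokernel bundle of type $(\opn(-d),\opn)$ (conditions (1)--(5) are immediate, since both factors are line bundles). Since $h^0(\opn(d)) = \binom{n+d}{d} \geq n+1$, Lemma \ref{simpquiv}(a) gives that a generic such bundle is simple; combined with the genericity hypothesis on the regular sequence, this forces $F_1$ itself to be simple.

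For the inductive step, assume $F_{i-1}$ is simple, $i \geq 2$. Twisting
\[ 0 \to F_{i-1} \to \opn(-(n-i+1)d)^{\binom{n+1}{n-i+1}} \to F_i \to 0 \]
by $\opn((n-i+1)d)$ presents $F_i((n-i+1)d)$ as a cokernel bundle of type $(F_{i-1}((n-i+1)d),\opn)$. Simplicity of $E_1 := F_{i-1}((n-i+1)d)$ is the inductive hypothesis; $H^1(E_1) = 0$ follows from Proposition \ref{bsprop} since ${\rm hd}(F_{i-1}) \leq i-1 \leq n-2$; and $H^0(E_1) = 0$ follows by chasing the long exact sequences of the twisted Koszul resolution, whose terms are line bundles $\opn(-jd)^\star$ with $1 \leq j \leq i$, all of which have $H^0 = 0$. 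For the numerical inequality $h^0(E_1^*) \geq b_1 := \binom{n+1}{n-i+1}$ needed in Lemma \ref{simpquiv}(a), I would invoke the self-duality of the Koszul complex, namely $F_{i-1}^* \cong F_{n-i+1}((n+1)d)$, obtained by dualizing the resolution of $F_{i-1}$ and matching the result with the Koszul complex itself; this gives $E_1^* \cong F_{n-i+1}(id)$. The short exact sequence $0 \to F_{n-i}(id) \to \opn^{\binom{n+1}{i}} \to F_{n-i+1}(id) \to 0$ from (\ref{syzygies}), together with the analogous vanishings $H^0(F_{n-i}(id)) = H^1(F_{n-i}(id)) = 0$, then yields both the global generation of $E_1^*$ and the equality $h^0(E_1^*) = \binom{n+1}{i} = b_1$. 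Lemma \ref{simpquiv}(a) then shows $F_i$ is simple.

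The rank formula ${\rm rk}(F_i) = \binom{n}{i}$ is immediate by induction from the short exact sequences. For ${\rm hd}(F_i) = i$, the upper bound follows from the Koszul resolution, while the lower bound, by Proposition \ref{bsprop}, reduces to exhibiting some $l_0 \in \mathbb{Z}$ and $1 \leq q \leq n-i$ with $H^q(F_i(l_0)) \neq 0$; this can be produced by propagating the nonvanishing of $H^n(\opn(l_0 - (n+1)d))$ through the long exact sequences of (\ref{syzygies}). The main obstacle is the cohomological bookkeeping needed for the $H^0$ and $H^1$ vanishings of the intermediate twisted syzygies and for pinning down the twist in the Koszul self-duality; one also relies on the genericity of the regular sequence to ensure that the Koszul differentials lie in the non-empty open subset produced by Lemma \ref{simpquiv}(a).
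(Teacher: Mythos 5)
Your proposal is correct and follows essentially the same route as the paper: induction on $i$, realizing each twisted syzygy as a cokernel bundle of type $(F_{i-1}((n-i+1)d),\opn)$, verifying conditions (1)--(5) via Proposition \ref{bsprop} and cohomology chases along the Koszul complex, and using its self-duality to identify $E_1^{*}\simeq F_{n-i+1}(id)$ and obtain $h^0(E_1^{*})=\binom{n+1}{i}$ so that Lemma \ref{simpquiv}(a) applies, with the rank and homological dimension handled exactly as in the paper. The only cosmetic difference is the base case, where the paper invokes the Bohnhorst--Spindler stability criterion for $F_1$ rather than Lemma \ref{simpquiv}.
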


\begin{proof} Again, we proceed by induction on $i$. We start by showing that ${\rm hd}(F_i) = i$. Indeed, it is clear that ${\rm hd}(F_1) = 1$. Assume that ${\rm hd}(F_j) = j$ for $2 \leq j \leq t-1$, $t \leq n-2$. Now $F_t$ and $F_{t-1}$ fit into the short exact sequence 
$$ \xymatrix{ 0 \ar[r] & F_{t-1} \ar[r] & \opn(-(n-t+1)d)^{\binom{n+1}{n-t+1}} \ar[r] & F_{t} \ar[r] & 0 }. $$
Therefore, from the long exact sequence of cohomologies
$$ \xymatrix{ \cdots \ar[r]  & H^i(F_{t}) \ar[r] & H^{i+1}(F_{t-1}) \ar[r] & H^{i+1}(\opn(-(n-t+1)d))^{\binom{n+1}{n-t+1}} \ar[r] & \cdots} $$
we get
$$ H^i(F_{t}) \simeq  H^{i+1}(F_{t-1}),\; \text{for}\; 1 \leq i \leq n-2.$$
Since ${\rm hd}(F_{t-1}) = t-1$,  we know from Proposition \ref{bsprop},  that $H^p_{*}(F_{t-1}) = 0$ for $1 \leq p \leq n-t$. Therefore $H^p_{*}(F_{t}) = 0$ for $1 \leq p \leq n-t-1$, hence ${\rm hd}(F_{t}) \leq t$ by Proposition \ref{bsprop}. 

Now suppose ${\rm hd}(F_t) \leq t-1$, then $H^p_{*}(F_t) = 0$ for $1 \leq p \leq n-t$. Since $H^{n-t}(F_t) \simeq H^{n-t+1}(F_{t-1})$ and ${\rm hd}(F_{t-1}) = t-1$, there must be $l_0 \in \mathbb{Z}$ such that $H^{n-t}(F_t(l_0)) \simeq H^{n-t+1}(F_{t-1}(l_0)) \neq 0 $. Hence ${\rm hd(F_t)} = t$.

Let us now prove the simplicity of each $F_i$; since the Koszul complex is self-dual up to twist, is sufficient to prove that $F_i$ is simple for $1 \leq i \leq \frac{n-1}{2}$. Again, we argue by induction on $i$. For $i = 1$, the first sequence in (\ref{syzygies}) and \cite[Theorem 2.7]{BS} imply that $F_1$ is stable, and therefore simple.

Now suppose that the statement is true for $2 \leq k \leq i-1$. Consider the Koszul complex twisted by $\opn((n-i+1)d)$, and let $\overline{F}_{i-1} = F_{i-1}((n-i+1)d)$ and let $\alpha_i : \overline{F}_{i-1} \rightarrow \opn^{\binom{n+1}{i}}$ be a generic map. Take $\overline{F}_i = {\rm coker}\alpha_{i-1}$; we check that it is a cokernel bundle. Clearly $\opn$ and $\overline{F}_{i-1}$ (by the induction hypothesis) are simple, and it is easy to check that ${\rm Hom}(\opn,\overline{F}_{i-1})=0$, since
$$ H^0(\pn,\overline{F}_{i-1}) \simeq H^j(\pn, \overline{F}_{i-(j+1)}), \; 0 \leq j \leq i-2 $$
and $H^{i-2}(\pn,\overline{F}_{1}) = 0$

Next, we check that ${\rm Ext}^1(\opn, \overline{F}_{i-1}) = 0$. Since ${\rm hd}(\overline{F}_{i-1}) = i-1$, hence, by Proposition \ref{bsprop}, it follows that 
$${\rm Ext}^{1}(\opn, \overline{F}_{i-1}) = H^1(\pn, \overline{F}_{i-1}) = 0.$$

To see that $\overline{F}_{i-1}^{*}$ is globally generated, take the sequence
$$ \xymatrix{
0 \ar[r] & \overline{F}_{i}^{*} \ar[r] & \opn^{\binom{n+1}{i}} \ar[r]^{\alpha_{i-1}^{*}} & \overline{F}_{i-1}^{*} \ar[r] & 0
}. $$
It follows that $\overline{F}_{i-1}^{*}$ is globally generated, and by the Koszul complex,
$h^0(\overline{F}_{i-1}^{*}) = \binom{n+1}{i}$. Then Lemma \ref{simpquiv} implies that $\overline{F}_{i}$ is a simple cokernel bundle, and from the Koszul complex $\overline{F}_{i}^{*}$ and $F_{n-1}(id)$ are $i$-th syzygyes of the ideal $I$. Therefore
$$ \overline{F}_i^{*} \simeq F_{n-i}(id) \simeq F_i((n-i+1)d). $$ 
and $F_i$ is simple, as desired.

Regarding the claim on the rank of $F_i$, note from the short exact sequences of Koszul complex (\ref{syzygies}) that
$${\rm rk}(F_{n-k}) = \binom{n+1}{k} - {\rm rk}(F_{n-(k-1)}), \;\; 2 \leq k \leq n.$$
For the case $i=1$ we have the resolution
$$ \xymatrix{0 \ar[r] & F_{n-1} \ar[r] & \opn(-d)^{n+1} \ar[r] & \opn \ar[r] & 0} $$
and ${\rm rk}F_{n-1} = n = \binom{n}{1}$. Assuming that ${\rm rk}F_{n-(i-1)}= \binom{n}{i-1}$, we have that
$$ {\rm rk}F_{n-i} = \binom{n+1}{i}- {\rm rk}F_{n-(i-1)} = \binom{n+1}{i} - \binom{n}{i-1} = \binom{n}{i}. $$
Since $F_i$ is isomorphic to $F_{n-i}^{*}$ up to a twist, follows that ${\rm rk}F_i = \binom{n}{i}$ for $1 \leq i \leq n-1$.
\end{proof}


\section{Compressed Gorenstein Artinian Graded Algebras}\label{syzcgag}

In this section we apply the same ideas of previous Section to a different minimal free resolution and find other examples of simple vector bundles on $\pn$. For more details on the definitions used below, we refer to \cite{iarr,MMRN}.

Let $R$ be a local ring with maximal ideal $M$ over a field $k$. Let $I \subset R$ be an ideal such that $A = R/I$ is Artinian with maximal ideal $ m = M+I$. The {\it socle} of $A$, denoted ${\rm Soc} A$,  is the annihilator $(0 : m) \subset A$.

The Artinian algebra $A$ is {\it Gorenstein} (or $I$ is Gorenstein) if and only if $(0: m)$ has length one. In this case, the largest integer $j$ such that $m^j \neq 0$ is the {\it socle-degree} of $A$.

Now let $k$ be a field and $R = k[x_0, \cdots, x_n]$ be the ring of polynomials. An Artinian $k-$algebra $A = R/I$ has
{\it socle degrees} $(s_1, \cdots, s_t)$, if the minimal generators of its socle (as $R-$module) have degrees $s_1 \leq \cdots \leq s_t$.

 The Hilbert-function of $A$ is denoted by $h_A(t) := \dim_k A_t. $ If the Artinian algebra $A$ has socle degrees $(s_1, \cdots, s_t)$, then $s = {\rm max }\{s_1, \cdots, s_t\}$ is called the {\it socle degree} of $A$. For fixed socle degrees, a graded Artinian algebra is {\it compressed}, if it has maximal Hilbert function among all graded Artinian algebras with that socle degrees.

Let $\pn = {\rm Proj}(R)$. Let $I = (f_1, \ldots, f_{\alpha_1}) $ be an ideal generated by $\alpha_1$ forms of degree $t+1$, such that the algebra $A = R/I$ is a compressed Gorenstein Artinian graded algebra of embedding dimension $n+1$ and socle degree $2t$. Thus, by Proposition $3.2$ of \cite{MMRN}, the minimal free resolution of $A$ is

\vspace{.3cm}

$$ \xymatrix{
0 \ar[r] & R(-2t-n-1) \ar[r] & R(-t-n)^{\alpha_n} \ar[r] & \cdots \ar[r] & R(-t-p)^{\alpha_p} \ar[r] & \cdots & } $$
$$ \xymatrix{
\cdots \ar[r] & R(-t-3)^{\alpha_3} \ar[r] &  R(-t-2)^{\alpha_2} \ar[r] & R(-t-1)^{\alpha_1} \ar[r] & R \ar[r] & A \ar[r] & 0
}$$ 
where
$$ \alpha_i = \binom{t+i - 1}{i-1} \binom{t+n+1}{n+1-i} - \binom{t+n-i}{n+1-i} \binom{t+n}{i-1}, \, for  \, i=1, \cdots, n. $$

Sheafifying the complex above we have

\begin{eqnarray}\label{compG}
\xymatrix{ 0 \ar[r] & \opn(-2t-n-1) \ar[r] & \opn(-t-n)^{\alpha_n} \ar[r] & \opn(-t-n+1)^{\alpha_{n-1}} \ar[r] & \cdots }
\end{eqnarray}
$$\xymatrix{ \cdots \ar[r] & \opn(-t-p)^{\alpha_p}  \ar[r] & \cdots \ar[r] &  \opn(-t-2)^{\alpha_2} \ar[r] & \opn(-t-1)^{\alpha_1} \ar[r]^<<<<{\beta} & \opn \ar[r] & 0 }$$  where $\beta$ is the map given by the $\alpha_1$ forms of degree $t+1$.

Consider the exact sequences

\begin{eqnarray}\label{seqsimp}
\xymatrix{0 \ar[r] & \opn(-2t-n-1) \ar[r] & \opn(-t-n)^{\alpha_n} \ar[r] & F_1 \ar[r] & 0}
\end{eqnarray}
$$\xymatrix{0 \ar[r] & F_1 \ar[r] & \opn(-t-n+ 1)^{\alpha_{n-1}} \ar[r] & F_2 \ar[r] & 0}$$
 $$\xymatrix{& &  \vdots & &}$$
$$\xymatrix{0 \ar[r] & F_{n-p} \ar[r] & \opn(-t-p)^{\alpha_{p}} \ar[r] & F_{n-(p-1)} \ar[r] & 0}$$
$$\xymatrix{& & \vdots & & }$$
$$\xymatrix{0 \ar[r] & F_{n-1} \ar[r] & \opn(-t-1)^{\alpha_{1}} \ar[r] & \opn \ar[r] & 0}$$

\vspace{.2cm}

\begin{lemma}\label{dimhomcg}
For each $1 \leq i \leq n-1$, we have ${\rm hd}(F_i) = i$.
\end{lemma}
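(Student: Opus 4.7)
The plan is to mirror the inductive argument used for the Koszul syzygies in Theorem \ref{Kos}, with Proposition \ref{bsprop} (Bohnhorst--Spindler) as the main tool. I argue by induction on $i$.

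For the base case $i=1$, the short exact sequence (\ref{seqsimples1}) is itself a length-one resolution of $F_1$ by direct sums of line bundles, so ${\rm hd}(F_1)\leq 1$. Strict equality is clear, exactly as in the Koszul setting: if ${\rm hd}(F_1)$ were zero, then $F_1$ would split as a direct sum of line bundles, contradicting the minimality of the underlying free resolution of $A$. Equivalently, one may check via the long exact sequence of cohomology attached to (\ref{seqsimples1}) that $H^{n-1}(F_1(l))\neq 0$ for some $l$, whence ${\rm hd}(F_1)\neq 0$ by Proposition \ref{bsprop}.

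For the inductive step, assume ${\rm hd}(F_{i-1}) = i-1$ with $2 \leq i \leq n-1$. I would apply the long exact sequence of cohomology to the short exact sequence
\[ 0 \to F_{i-1} \to \opn(-t-n+i-1)^{\alpha_{n-i+1}} \to F_i \to 0, \]
twisted by $\opn(l)$. Since $H^q(\opn(k))=0$ for all $k$ and all $1\leq q \leq n-1$, this yields isomorphisms $H^q(F_i(l)) \simeq H^{q+1}(F_{i-1}(l))$ for every $l$ and every $1\leq q\leq n-2$. The induction hypothesis combined with Proposition \ref{bsprop} gives $H^q_*(F_{i-1})=0$ for $1\leq q\leq n-i$, together with the existence of some $l_0$ with $H^{n-i+1}(F_{i-1}(l_0))\neq 0$ (otherwise ${\rm hd}(F_{i-1})$ would drop to at most $i-2$). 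Transporting these statements through the isomorphisms above yields $H^q_*(F_i)=0$ for $1\leq q \leq n-i-1$, giving ${\rm hd}(F_i)\leq i$ by Proposition \ref{bsprop}, together with $H^{n-i}(F_i(l_0))\neq 0$, which rules out ${\rm hd}(F_i)\leq i-1$. Hence ${\rm hd}(F_i)=i$.

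The only delicate point is the base case; beyond that the induction is a purely cohomological dimension-shifting argument, entirely parallel to the one in Section \ref{syzkoszul}, and does not require any additional input specific to the compressed Gorenstein structure of $A$ or to the self-duality (up to twist) of the minimal free resolution.
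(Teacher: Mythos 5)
Your proof is correct and follows essentially the same route as the paper's: induction on $i$, dimension-shifting through the long exact cohomology sequence of $0 \to F_{i-1} \to \opn(-t-n+i-1)^{\alpha_{n-i+1}} \to F_i \to 0$, and Proposition \ref{bsprop} to convert the vanishing/non-vanishing of $H^q_*$ into the two inequalities ${\rm hd}(F_i)\leq i$ and ${\rm hd}(F_i)\not\leq i-1$. The only difference is that you spell out the base case (where the cohomological argument via $H^{n-1}(F_1(l))\simeq H^n(\opn(-2t-n-1+l))$ is the reliable one, rather than the appeal to minimality of the free resolution), which the paper leaves as ``not difficult to check.''
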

\begin{proof} Let us prove it by induction on $i$. The case $i=1$ is not difficult to check.

So supose that for $2 \leq j \leq l-1$, ${\rm hd}(E_j) = j$. Consider the short exact sequence
$$ \xymatrix{0 \ar[r] & F_{l-1} \ar[r] & \opn(-t-n+l-1)^{\alpha_{n-l+1}} \ar[r] & F_l \ar[r] & 0 } $$
then by the long exact sequence of cohomologies
$$ \xymatrix{ \cdots \ar[r] & H^{k}(\opn(-t-n+l-1))^{\alpha_{n-l+1}} \ar[r] & H^{k}(F_l) \ar[r] & H^{k+1}(F_{l-1}) \ar[r] & \cdots } $$ we have
$$ H^{k}(F_l) \simeq H^{k+1}(F_{l-1}), 1 \leq k \leq n-2. $$
Since ${\rm hd}(F_{l-1}) = l-1$ by Proposition \ref{bsprop}, $H^{p}_{*}(F_{l-1}) = 0, 1 \leq p \leq n-l$ and there is
$t_0 \in \mathbb{Z}$ such that $H^{n-l+1}(F_{l-1}(t_0)) \neq 0$. Therefore we have
$$ H^{p}_{*}(F_l) = 0, 1 \leq p \leq n-l-1 $$
hence ${\rm hd}(F_l) \leq l$ by Proposition \ref{bsprop}. Since $H^{n-l}(F_l(t_0)) \simeq H^{n-l+1}(F_{l-1}(t_0)) \neq 0$, we cannot have ${\rm hd}(F_l) \leq l-1$, otherwise
$$ H^{p}_{*}(F_l) = 0, 1 \leq p \leq n-l. $$
Therefore ${\rm hd}(F_l) = l$ and we are done.
\end{proof}

\begin{lemma}\label{lalpha}
For each $1 \leq i \leq n-1$, $h^0(F_i^{*}(-t-n+i)) = \alpha_{n-i}$.
\end{lemma}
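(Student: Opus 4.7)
The plan is to read off $h^0(F_i^*(-t-n+i))$ from a locally free resolution of $F_i^*$ that, after twisting by $\opn(-t-n+i)$, has a single term contributing to $H^0$.

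First, I would construct a resolution of $F_i^*$ by dualizing and splicing the short exact sequences (\ref{seqsimp}) starting from the $\opn$ end. Dualizing each of $0 \to F_{n-p} \to \opn(-t-p)^{\alpha_p} \to F_{n-(p-1)} \to 0$ for $p = 1, \dots, n-i$ (with $F_n := \opn$) and splicing them together yields
\[
0 \to \opn \to \opn(t+1)^{\alpha_1} \to \opn(t+2)^{\alpha_2} \to \cdots \to \opn(t+n-i)^{\alpha_{n-i}} \to F_i^* \to 0.
\]
Twisting by $\opn(-t-n+i)$ gives
\[
0 \to \opn(-t-n+i) \to \opn(i-n+1)^{\alpha_1} \to \cdots \to \opn(-1)^{\alpha_{n-i-1}} \to \opn^{\alpha_{n-i}} \to F_i^*(-t-n+i) \to 0,
\]
in which every line bundle summand to the left of the last one carries a strictly negative twist (since $i-n+k \le -1$ for $1 \le k \le n-i-1$ and $-t-n+i \le -1$).

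Second, using the standard vanishing $H^j(\opn(m))=0$ for $1\le j\le n-1$ and any $m\in\mathbb{Z}$, I would carry out an induction on the intermediate syzygy sheaves $N_k := \mathrm{coker}(L_{k-1}\to L_k)$ of the above resolution, with $L_0=\opn(-t-n+i)$, $L_k=\opn(k-n+i)^{\alpha_k}$ for $1\le k\le n-i-1$, and $L_{n-i}=\opn^{\alpha_{n-i}}$. The inductive step uses the long exact cohomology sequence of $0\to N_{k-1}\to L_k \to N_k \to 0$, together with $H^0(L_k)=0$ for $k\le n-i-1$ (which holds since the twist is strictly negative), and yields $H^j(N_k)=0$ for $0\le j\le n-k-1$ whenever $1\le k\le n-i-1$.

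Finally, applying this vanishing to the last short exact sequence $0\to N_{n-i-1}\to \opn^{\alpha_{n-i}}\to F_i^*(-t-n+i)\to 0$ gives $H^0(N_{n-i-1})=H^1(N_{n-i-1})=0$ (the latter using $i\ge 1$), so the long exact cohomology sequence collapses to $H^0(F_i^*(-t-n+i)) \simeq H^0(\opn^{\alpha_{n-i}}) = \mathbb{C}^{\alpha_{n-i}}$, which is the desired equality. The boundary case $i=n-1$ is immediate since then the resolution reduces to $0 \to \opn(-t-1) \to \opn^{\alpha_1} \to F_{n-1}^*(-t-1) \to 0$. The only delicate point is verifying that every twist to the left of $\opn^{\alpha_{n-i}}$ is strictly negative, so that the syzygy induction starts cleanly; this is precisely what the hypothesis $1\le i\le n-1$ guarantees.
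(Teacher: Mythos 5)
Your proof is correct, and it follows the same basic strategy as the paper (dualize the short exact sequences, twist so that the term $\opn(t+n-i)^{\alpha_{n-i}}$ becomes trivial, and chase cohomology), but it differs in how the intermediate vanishings are obtained. The paper peels off one dual short exact sequence at a time and kills $H^0$ and $H^1$ of the previous syzygy $F^*_{n-l}(-t-l-1)$ by appealing to Proposition \ref{bsprop} together with the homological dimensions computed in Lemma \ref{dimhomcg} (and it first reduces to half the range of $i$ via self-duality, though the stated index range $1\le j\le \frac{n-1}{2}$ does not quite match the substitution $j=n-i$). You instead splice all the dualized sequences into a single resolution of $F_i^*$ by line bundles, observe that after twisting by $\opn(-t-n+i)$ every term to the left of $\opn^{\alpha_{n-i}}$ has strictly negative twist, and run a standard syzygy induction using only $H^0(\opn(m))=0$ for $m<0$ and the vanishing of intermediate cohomology of line bundles on $\pn$. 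This buys self-containedness: your argument does not invoke Proposition \ref{bsprop}, Lemma \ref{dimhomcg}, or self-duality, and it treats all $1\le i\le n-1$ uniformly. The one hypothesis you are implicitly using, namely exactness of the dualized sequences, holds because the syzygies $F_j$ are locally free, which is the same fact the paper uses when it writes down the dual complex.
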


\begin{proof}
Since the complex $(\ref{compG})$ is self-dual up to twist (see \cite[Theorem 1.1.27]{MR}) we only need to check our claim for
$1 \leq i \leq \frac{n-1}{2}$. Proving the Lemma is equivalent to show that
$${\rm h}^0(F^{*}_{n-j}(-t-j)) = \alpha_j, \; {\rm for}\; 1 \leq j\leq \frac{n-1}{2}.$$
Taking the duals in $(\ref{seqsimp})$ we have

\begin{eqnarray}\label{seqdual}
\xymatrix{0 \ar[r] & F_1^{*} \ar[r] & \opn(t+n)^{\alpha_n} \ar[r] & \opn(2t+n+1)\ar[r] & 0 }
\end{eqnarray}
\begin{eqnarray*}\xymatrix{ 0 \ar[r] & F_2^{*} \ar[r] & \opn(t+n-1)^{\alpha_{n-1}} \ar[r] & F_{1}^{*} \ar[r] & 0} \end{eqnarray*}
\begin{eqnarray*}\xymatrix{ & & \vdots & & }\end{eqnarray*}
\begin{eqnarray*}\xymatrix{ 0 \ar[r] & F^{*}_{n-p+1} \ar[r] & \opn(t+p)^{\alpha_p} \ar[r] & F_{n-p}^{*} \ar[r] & 0}\end{eqnarray*}
\begin{eqnarray*}\xymatrix{& & \vdots & & }\end{eqnarray*}
\begin{eqnarray*}\xymatrix{  0 \ar[r] & F^{*}_{n-1}\ar[r] & \opn(t+2)^{\alpha_2} \ar[r] & F^{*}_{n-2} \ar[r] & 0}\end{eqnarray*}
\begin{eqnarray*}\xymatrix{  0 \ar[r] & \opn \ar[r] & \opn(t+1)^{\alpha_1} \ar[r] & F^{*}_{n-1} \ar[r] & 0 }\end{eqnarray*}

Proceeding by induction on $j$, first consider the case $j=1$:  twisting $(\ref{seqdual})$ with $\opn(-t-1)$ we get
$$\dim{\rm H}^0(F_{n-1}^{*}(-t-1)) = \alpha_{1}.$$

Next, suppose the Lemma is true for $2 \leq l \leq n-2$. Twist $(\ref{seqdual})$ with $\opn(-t-l-1)$, then we have

$$ \xymatrix{0 \ar[r] & F^{*}_{n-l}(-t-l-1) \ar[r] & \opn^{\alpha_{l+1}} \ar[r] & F^{*}_{n-l-1}(-t-l-1) \ar[r] & 0 }. $$
From the long exact sequence of cohomologies

$$ \xymatrix{0 \ar[r] & H^{0}( F^{*}_{n-l}(-t-l-1)) \ar[r] & H^{0}( \opn)^{\alpha_{l+1}} \ar[r] & \\
\ar[r] & H^{0}(F^{*}_{n-l-1}(-t-l-1)) \ar[r] & H^{1}(F^{*}_{n-l}(-t-l-1)) \ar[r] & 0 } $$

By Proposition \ref{bsprop}, $H^{1}(F^{*}_{n-l}(-t-l-1)) = 0$. Now we want to check that $H^{0}(F^{*}_{n-l}(-t-l-1)) = 0$. Twisting $(\ref{seqdual})$ with $\opn(-t-l-1)$ we have

$$\xymatrix{0 \ar[r] & F^{*}_{n-l+1}(-t-l-1) \ar[r] & \opn(-1)^{\alpha_l} \ar[r] & F^{*}_{n-l}(-t-l-1) \ar[r] & 0}.$$ Using Proposition \ref{bsprop}, $H^{0}(F^{*}_{n-l}(-t-l-1)) = 0$ and therefore $\dim H^{0}(F^{*}_{n-l-1}(-t-l-1))) = \alpha_{l+1}.$
\end{proof}

We are finally ready to establish the main result of this Section.

\begin{theorem}\label{teoGor}
The syzygies $F_i$ are simple for $i=1, \cdots, n$.
\end{theorem}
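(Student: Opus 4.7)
The plan is to argue by induction on $i$, in direct parallel with the proof of Theorem \ref{Kos}. The self-duality of the complex (\ref{compG}) up to twist (which underlies Lemma \ref{lalpha}) reduces the problem to the range $1 \leq i \leq (n-1)/2$; simplicity of $F_i$ for $i > (n-1)/2$ then follows at once, since up to a twist $F_i$ is the dual of a lower-indexed syzygy. For the base case $i = 1$, a suitable twist rewrites $0 \to \opn(-2t-n-1) \to \opn(-t-n)^{\alpha_n} \to F_1 \to 0$ in a form to which \cite[Theorem 2.7]{BS} applies, giving stability, hence simplicity of $F_1$.

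For the inductive step, fix $i \geq 2$ in the reduced range and assume $F_j$ is simple for $j < i$. Starting from $0 \to F_{i-1} \to \opn(-t-n+i-1)^{\alpha_{n-i+1}} \to F_i \to 0$, I would twist by $\opn(t+n-i+1)$ to obtain $0 \to \overline{F}_{i-1} \to \opn^{\alpha_{n-i+1}} \to \overline{F}_i \to 0$, where $\overline{F}_{\bullet} := F_{\bullet}(t+n-i+1)$. One then views $\overline{F}_i$ as a cokernel bundle of type $(\overline{F}_{i-1}, \opn)$, and verifies in turn: (1) simplicity of $\opn$ and of $\overline{F}_{i-1}$ (the latter by induction); (2) $\mathrm{Hom}(\opn, \overline{F}_{i-1}) = H^0(\overline{F}_{i-1}) = 0$; (3) $\mathrm{Ext}^1(\opn, \overline{F}_{i-1}) = H^1(\overline{F}_{i-1}) = 0$, which follows from $\mathrm{hd}(F_{i-1}) = i-1$ (Lemma \ref{dimhomcg}) via Proposition \ref{bsprop} since $n - i \geq 1$; (4) global generation of $\overline{F}_{i-1}^{*}$, obtained by dualizing the same sequence so that $\overline{F}_{i-1}^{*}$ is a quotient of $\opn^{\alpha_{n-i+1}}$; and (5) the count $\dim \mathrm{Hom}(\overline{F}_{i-1}, \opn) = h^0(\overline{F}_{i-1}^{*}) = \alpha_{n-i+1}$, which is precisely Lemma \ref{lalpha} after reindexing. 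With (1)--(5) in hand, Lemma \ref{simpquiv}(a) yields a non-empty open subset of $\mathrm{Hom}(\overline{F}_{i-1}, \opn^{\alpha_{n-i+1}})$ whose cokernel is simple.

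The closing move would mimic the end of the proof of Theorem \ref{Kos}: one identifies the generic simple cokernel bundle with the specific syzygy $\overline{F}_i$ up to twist. Concretely, the dual of the cokernel just produced fits as an $(n-i)$-th syzygy of the ideal $J$, so by uniqueness of minimal syzygies it agrees with $F_{n-i}$ up to twist; applying self-duality of (\ref{compG}) then identifies $\overline{F}_i$ with $F_i(t+n-i+1)$ up to twist, transferring simplicity to $F_i$ itself.

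The main obstacle is precisely this transfer from ``the generic cokernel is simple'' to ``this particular $F_i$ coming from the minimal free resolution is simple.'' It is handled, as in the Koszul case, by exploiting the self-dual structure of the complex together with uniqueness of minimal free syzygies. A subsidiary technical point is the vanishing $H^0(\overline{F}_{i-1}) = 0$ needed in step (2), which is not supplied directly by Proposition \ref{bsprop}; one checks it by chasing cohomology through the resolution of $\overline{F}_{i-1}$, whose terms after the twist all have the form $\opn(k)$ with $k \leq -1$, using $H^p(\opn(k)) = 0$ for $1 \leq p \leq n-1$.
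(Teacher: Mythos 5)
Your inductive step is essentially the paper's own argument: the same five cokernel-bundle conditions, with condition (5) supplied by Lemma \ref{lalpha} and the $H^0$- and $H^1$-vanishings by Lemma \ref{dimhomcg} and Proposition \ref{bsprop}, followed by Theorem \ref{decon} and the identification of the generic simple cokernel with the actual syzygy via uniqueness of minimal syzygies and self-duality of (\ref{compG}). The genuine gap is in your base case. The sequence $0 \to \opn(-2t-n-1) \to \opn(-t-n)^{\alpha_n} \to F_1 \to 0$ is \emph{not} of the shape covered by \cite[Theorem 2.7]{BS}: that result concerns bundles of rank exactly $n$ on $\pn$, i.e.\ resolutions with $k$ kernel summands and $n+k$ middle summands, and no twist changes the rank. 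Here ${\rm rk}(F_1)=\alpha_n-1$, and $\alpha_n$ is strictly larger than $n+1$ in general (for $t=1$ one computes $\alpha_n=n(n+3)/2$), so stability of $F_1$ cannot be obtained this way. This is precisely the point where the paper warns that the extremal maps of this minimal free resolution are not generic, so that neither Theorem \ref{decon} nor a genericity-based stability result applies, and an ad hoc argument is required.

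The paper's replacement for your base case runs as follows. By self-duality, $F_1\simeq F_{n-1}^{*}(-2t-n-1)$, so it suffices to prove $F_{n-1}$ simple. Applying ${\rm Hom}(-,F_{n-1})$ to $0\to F_{n-1}\to\opn(-t-1)^{\alpha_1}\to\opn\to 0$ yields an exact sequence in which ${\rm Hom}(F_{n-1},F_{n-1})$ is caught between ${\rm Hom}(\opn(-t-1)^{\alpha_1},F_{n-1})=H^{0}(F_{n-1}(t+1))^{\alpha_1}$ and ${\rm Ext}^{1}(\opn,F_{n-1})=H^{1}(F_{n-1})\simeq k$; the chase $H^{0}(F_{n-1}(t+1))\simeq H^{1}(F_{n-2}(t+1))\simeq\cdots\simeq H^{n-2}(F_{1}(t+1))\simeq H^{n-1}(\opn(-t-n))=0$ through the twisted complex then forces ${\rm Hom}(F_{n-1},F_{n-1})\hookrightarrow k$, hence equality since the identity is there. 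You should replace the appeal to Bohnhorst--Spindler by an argument of this kind (or otherwise justify simplicity of $F_1$ without any genericity assumption on the resolution); with that repair, the rest of your outline goes through as in the paper.
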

\begin{proof}
We know that the resolution is self dual up to twist, \cite[Prop. 1.1.27]{MR},  thus we only need to check for $F_i$, $1 \leq i \leq \frac{n-1}{2}$. Let us prove by induction on $i$. We can not apply Theorem \ref{decon} because the map
$\beta : \opn(-t-1)^{\alpha_1} \rightarrow \opn$ is not generic. Thus we need an ad hoc proof.

Since the complex (\ref{compG}) is self dual, we have
$$ F_{n-1} \simeq F_1^{*}(-2t-n-1). $$
Thus it is sufficient to prove that $F_{n-1}$ is simple.

Applying ${\rm Hom}( -,F_{n-1})$ to the sequence
$$\xymatrix{0 \ar[r] & F_{n-1} \ar[r] & \opn(-t-1)^{\alpha_1} \ar[r] & \opn \ar[r] & 0} $$
it is enough to check
$$ {\rm Hom}(\opn(-t-1)^{\alpha_1},F_{n-1}) = H^{0}(F_{n-1}(t+1))^{\alpha_1} = 0. $$ We have

$$ \xymatrix{0 \ar[r] & {\rm Hom}(F_{n-1},F_{n-1}) \ar[r] & k }. $$ Since
$1\in{\rm Hom}(F_{n-1},F_{n-1})$ then Hom$(F_{n-1},F_{n-1}) \simeq k$ and $F_{n-1}$ is simple.
Twisting all sequences of $(\ref{seqsimp})$ by $\opn(t+1)$ and applying the long exact sequence of cohomologies, one concludes that
$$ H^{0}(F_{n-1}(t+1)) \simeq H^{1}(F_{n-2}(t+1)) \simeq H^{j}(F_{n-j-1}(t+1)) \; \mbox{for}\; 1 \leq j \leq n-2. $$
However,
$$ H^{n-2}(F_{1}(t+1)) \simeq H^{n-1}(\opn(-t-n))=0. $$
Thus $H^{0}(F_{n-1}(t+1))^{a_1} = 0$ therefore $F_{n-1}$ is simple.


Now that we know that $F_1$ is simple, let us prove by induction on $i$ that $F_i$ is simple for $2 \leq i \leq \frac{n-1}{2}$. Suppose
$F_{k-1}$ is simple. Let $\gamma_{k}$ be a generic map

$$\gamma_k: F_{k-1} \rightarrow \opn(-t-(n-k+1))^{\alpha_{n-k+1}}.$$ We have

\begin{itemize}

\item[$(1)$] $F_{k-1}$ and $\opn(-t-n+k-1)$ are simple;

\item[$(2)$] Hom$(\opn(-t-n+k-1),F_{k-1}) \simeq H^{0}(F_{k-1}(t+n-k+1)) = 0$

\end{itemize}

In fact, tensoring $(\ref{seqsimp})$ by $\opn(t+n-k+1)$ we have the sequence

$$\xymatrix{0 \ar[r] & F_{k-2}(t+n-k+1) \ar[r] & \opn(-1)^{\alpha_{n-k+2}} \ar[r] & F_{k-1}(t+n-k+1) \ar[r] & 0}.$$ Therefore

$$H^{0}(F_{k-1}(t+n-k+1)) \simeq H^{1}(F_{k-2}(t+n-k+1)).$$ By Lemma \ref{dimhomcg}, ${\rm hd}(F_{k-2}) = k-2$, thus by Proposition \ref{bsprop}, $H^1(F_{k-2}(t+n-k+1)) = 0$.

\begin{itemize}

\item[$(3)$] ${\rm Ext}^{1}(\opn(-t-n+k-1), F_{k-1}) \simeq H^{1}(F_{k-1}(t+n-k+1)) = 0 $

\end{itemize}

 Indeed, since we have hd$(F_{k-1}) = k-1$, then $H^{1}(F_{k-1}(t+n-k-1)) = 0$ by Proposition \ref{bsprop}.

\begin{itemize}

\item[$(4)$] $F^{*}_{k-1} \otimes \opn(-t-n+k-1) \simeq F^{*}_{k-1}(-t-n+k-1)$ is globally generated.

\end{itemize}

This is true because from the complex (\ref{seqsimp}) we have
$$ F^{*}_{j} \simeq F_{n-j}(2t+n+1). $$
Twisting $(\ref{seqsimp})$ by $\opn(t+k)$ we see that $F_{n-k+1}(t+k)$ is globally generated.

\begin{itemize}

\item[$(5)$] $\dim {\rm Hom}(F_{k-1}, \opn(-t-n+k-1)) \simeq h^{0}(F^{*}_{k-1}(-t-n+k-1)) = \alpha_{n-k+1}$, by Lemma $\ref{lalpha}$.
\end{itemize}

Therefore $\overline{F}_{k} = {\rm coker}\alpha_k$ is a cokernel bundle, by definition. Since

$$
q(1, \alpha_{n-k+1}) = 1 + \alpha_{n-k+1}^2 - \alpha_{n-k+1}(h^{0}(F^{*}_{k-1}(-t-n+k-1))) = 1, $$
it follows from Theorem $\ref{decon}$ that $\overline{F}_{k}$ is simple.





Since $\overline{F}^{*}_k(-2t-n-1)$ and $F_{n-k}$ are both the $k-$syzygies of the map $\beta$, they must be isomorphic. Therefore $F_{n-k}$ is simple and we are done.
\end{proof}

The results proved in Theorem \ref{Kos} and Theorem \ref{teoGor} for the syzygies of the pure resolutions (\ref{Kcomp}) and
(\ref{compG}), respectively, point to an interesting possible generalization. More precisely, given a pure resolution of the form
\begin{eqnarray} \label{pureresol}
\xymatrix{0 \ar[r] & \opn(-d_p)^{\beta_{p}} \ar[r] & \cdots \ar[r] & \opn(-d_1)^{\beta_1} \ar[r] & \opn \ar[r] & 0},
 \end{eqnarray}
for which values of the integers $d_1 < \cdots < d_p$ and $\beta_i$, $i=1, \cdots, p$, $p \geq 2$, are its syzygies simple vector bundles?


\end{document}